\documentclass[11pt,a4paper]{article}

\usepackage[utf8]{inputenc}
\usepackage{bold-extra}
\usepackage{chngcntr}
\usepackage{tikz}
\usepackage{setspace}
\usepackage{url}
\usepackage{amsfonts}
\usepackage{amssymb}
\usepackage{epsfig}
\usepackage{amsmath}
\usepackage{commath}
\usepackage{natbib}
\usepackage{graphicx}
\usepackage{bbm,float}
\usepackage{dsfont}
\usepackage{color}
\usepackage{adjustbox}
\usepackage{amsmath,amssymb,amstext,amsfonts,amsthm,latexsym}
\usepackage{diagbox}
\usepackage{bm}
\usepackage{caption}
\usepackage{subcaption}
\usepackage{multirow}
\usepackage{color}
\usepackage{listings}
\usepackage{titlesec}
\usepackage{enumerate}
\usepackage{environ}
\usepackage{booktabs}
\usepackage{appendix}
\usepackage{bbm}
\usepackage{pgfplots}
\usepackage{makecell}
\numberwithin{equation}{section}
\lstset{language=Python, basicstyle=\footnotesize, numbers=left, numberstyle=\footnotesize, stepnumber=1, numbersep=5pt, backgroundcolor=\color{white}, showspaces=false, showstringspaces=false, showtabs=false, frame=single, tabsize=2, captionpos=b, breaklines=true, breakatwhitespace=false, escapeinside={\%*}{*)}, keywordstyle=\color{blue}, stringstyle=\color{cyan}}

\setlength{\parskip}{3mm}
\setlength{\topmargin}{-0.4in}
\setlength{\topskip}{0.3in}    
\setlength{\textheight}{9.5in} 
\setlength{\textwidth}{6in}    
\setlength{\oddsidemargin}{0in} 
\setlength{\evensidemargin}{0in}
\theoremstyle{plain}

\newtheorem{proposition}{Proposition}[section]

\newtheorem{lemma}{Lemma}[section]

\theoremstyle{definition}

\newtheorem{definition}{Definition}[section]
\newtheorem{algorithm}{Algorithm}[section]
\newtheorem{example}{Example}[section]

\theoremstyle{remark}

\pgfplotsset{compat=1.17}

\begin{document}
\title{\textbf{Modeling coskewness with zero correlation and correlation with zero coskewness}
}
\date{\today}
\author{Carole Bernard\thanks{Carole Bernard, Department of Accounting, Law and Finance, Grenoble Ecole de Management (GEM) and Department of Economics at Vrije Universiteit Brussel (VUB).  (email: \texttt{carole.bernard@grenoble-em.com}).}, Jinghui Chen\thanks{Corresponding author: Jinghui Chen, Department of Economics at Vrije Universiteit Brussel (VUB).  (email: \texttt{jinghui.chen@vub.be}).} \ and Steven Vanduffel\thanks{Steven Vanduffel, Department of Economics at Vrije Universiteit Brussel (VUB). (email: \texttt{steven.vanduffel@vub.be}). }}
\maketitle
\begin{abstract}
This paper shows that one needs to be careful when making statements on potential links between correlation and coskewness. Specifically, we first show that, on the one hand, it is possible to observe any possible values of coskewness among symmetric random variables but zero pairwise correlations of these variables. On the other hand, it is also possible to have zero coskewness and any level of correlation. Second, we generalize this result to the case of arbitrary marginal distributions showing the absence of a general link between rank correlation and standardized rank coskewness.
\end{abstract}

\textbf{Keywords:} Coskewness, Correlation, Rank coskewness, Rank correlation, Copula, Marginal distribution.

\newpage
\section{Introduction}
Let $X_i\sim F_i$, $i=1,2,\dots,d$ be random variables such that $\mu_i$ and $\sigma_i$ are their respective means and standard deviations, and their second moments are finite. One of the essential characteristics of dependency of a random vector $\bm{X}=(X_1,X_2,\dots,X_d)$ is the k\textsuperscript{th} order standardized central mixed moments
\begin{equation*}
	\mathbb{E}\left(\left(\frac{X_1-\mu_1}{\sigma_1}\right)^{k_1}\left(\frac{X_2-\mu_2}{\sigma_2}\right)^{k_2}\cdots \left(\frac{X_d-\mu_d}{\sigma_d}\right)^{k_d}\right)
\end{equation*}
where $k_i,$ $i=1,2,\dots,d,$ are non-negative integers such that $\sum_{i=1}^{d}k_i=k$. Specifically, the Pearson correlation coefficient \citep{pearson1895vii} is obtained when $k_{1}=k_{2}=1$ ($d=2$) and coskewness is obtained when $k_{1}=k_{2}=k_{3}=1$ ($d=3$).
	
The correlation coefficient between $X_i$ and $X_j$ denoted as $\rho_{ij}$, $i,j=1,2,\dots,d$, is given as 
\begin{equation*}
	\rho_{ij}=\frac{\mathbb{E}((X_i	-\mu_i)(X_j-\mu_j))}{\sigma_{i}\sigma_{j}},
\end{equation*} 
and the correlation matrix is a $d$ by $d$  matrix.
 \cite{jondeau2006optimal} define the $d$ by $d^2$ coskewness matrix of a $d$-dimensional random vector $\bm{X}$, as a matrix that contains all coskewnesses. The coskewness of $X_i$, $X_j$ and $X_k$, denoted by $S(X_i, X_j, X_k)$, $i,j,k= 1,2,\dots,d$, is given as $$S(X_i, X_j,X_k)=\frac{\mathbb{E}((X_i-\mu_{i})(X_j-\mu_{j})(X_k-\mu_{k}))}{\sigma_{i}\sigma_{j}\sigma_{k}}.$$ The coskewness matrix is denoted by $M_d$, so that, for example, when $d=3$,
\begin{equation*}
	M_3 = \left [ \begin{array}{ccc|ccc|ccc}
		s_{111} & s_{112} & s_{113} & s_{211} & s_{212} & s_{213} & s_{311} & s_{312} & s_{313} \\
		s_{121} & s_{122} & s_{123} & s_{221} & s_{222} & s_{223} & s_{321} & s_{322} & s_{323} \\
		s_{131} & s_{132} & s_{133} & s_{231} & s_{232} & s_{233} & s_{331} & s_{332} & s_{333}
	\end{array} \right ],
\end{equation*}
where $s_{ijk}=S(X_i,X_j,X_k)$, $i,j,k = 1,2,3$. The coskewness matrix $M_d$ is invariant w.r.t.\ location and scale parameters, i.e.,\ a linear transformation of $X_i$, $i=1,2,\dots,d$, does not affect $M_d$. However, the coskewness $S(X_i,X_j,X_k)$ generally depends on the marginal distributions and copula among the three variables; see \cite{bernard2023coskewness}.

It is well-known that correlation always takes values in $[-1, 1]$. However, such affirmation is not true for higher co-moments such as coskewness and cokurtosis. In particular, no universal range of values for coskewness works for all distributions; see \cite{bernard2023coskewness}. We thus use the notion of standardized rank coskewness, which is normalized and takes values in $[-1,\ 1]$.

This paper studies whether a relationship exists between correlation and coskewness. At first glance, it is easy to think that the answer is affirmative because the mathematical formulas of correlation and coskewness share some similarities. Moreover, correlations do not determine the dependence but at least impose some structure. For instance, the maximum and minimum correlation between two random variables are obtained by comonotonic and antimonotonic dependence, respectively. Hence, one could expect a link between the second cross and the third cross moment. For example, \cite{beddock2020two} use a split bivariate normal model to illustrate that the coskewness is monotonic to the correlation; see their Table 3. However, such conclusion heavily depends on the model assumed (here the split bivariate normal), and the remaining of this paper is dedicated to showing that, in general, there is no link between correlation and coskewness and that such conclusions can only be made under specific model assumptions.

The paper is organized as follows. In Section~\ref{three norm sec}, we present counterexamples based on three symmetrically distributed random variables. In Section~\ref{three margins}, we generalize the result to the case of random variables with arbitrary marginal distributions. Section~\ref{tail risk} provides some elements to justify statements that appear in previous literature on the link between coskewness and tail risk. The last section draws the conclusion.
	
\section{Correlation and coskewness with symmetric marginals}\label{three norm sec}
In this section, we aim to show that, in general, there is no link between the coskewness and the correlation coefficient in the case of symmetric distributions. Let $F_i$, $i=1,2,3$, be symmetric distributions, i.e., $X_i\sim F_i$. For symmetric case, we have explicit copulas to obtain the maximizing and minimizing coskewness \citep[see][]{bernard2023coskewness}. Moreover, the symmetric distribution appears as a benchmark in many applications in finance, such as optimal portfolio choice. More general distributions are discovered in Section~\ref{three margins}.

The goal of Section~\ref{three norm sec} is to prove the following two propositions.
\begin{proposition}\label{zero corr any cosk} Let $(X_1,X_2, X_3)$ be a random vector with symmetric marginals. For any given value of coskewness, ranging between the minimum and maximum admissible values, there exists a dependence model such that the coskewness among the three variables attains this value, and such that the pairwise correlations are all equal to zero.
\end{proposition}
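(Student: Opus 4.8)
The plan is to exploit the scale-location invariance of coskewness recalled above together with the sign-symmetry of the marginals. First I would standardize, setting $Y_i=(X_i-\mu_i)/\sigma_i$, so that each $Y_i$ is symmetric about $0$ with unit variance, the pairwise correlations become $\rho_{ij}=\mathbb{E}(Y_iY_j)$, and the coskewness becomes $S=\mathbb{E}(Y_1Y_2Y_3)$; by invariance it suffices to build a dependence model for $(Y_1,Y_2,Y_3)$. I would then invoke the explicit extremal copulas of \cite{bernard2023coskewness} for the symmetric case to fix a base law for $(Y_1,Y_2,Y_3)$ attaining the maximal coskewness $S^{0}=S_{\max}$. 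A key preliminary observation is that, because each marginal is symmetric, replacing $Y_1$ by $-Y_1$ leaves all three marginals unchanged but flips the sign of the coskewness; hence the admissible range is symmetric about zero, i.e.\ $S_{\min}=-S_{\max}$, and it is enough to realize every value in $[-S_{\max},S_{\max}]$.

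The core construction is a mixture over sign-reflections. For $\epsilon=(\epsilon_1,\epsilon_2,\epsilon_3)\in\{-1,+1\}^3$, let $\mu_\epsilon$ denote the law of $(\epsilon_1Y_1,\epsilon_2Y_2,\epsilon_3Y_3)$ under the base law. By symmetry of the marginals, every $\mu_\epsilon$ has the same marginals $F_1,F_2,F_3$, so any mixture $\sum_\epsilon p_\epsilon\mu_\epsilon$ does as well. Under $\mu_\epsilon$ one has $\mathbb{E}(Y_iY_j)=\epsilon_i\epsilon_j\,\rho^0_{ij}$ and $\mathbb{E}(Y_1Y_2Y_3)=\epsilon_1\epsilon_2\epsilon_3\,S_{\max}$, so by linearity the mixture has pairwise correlations $\rho^0_{ij}\sum_\epsilon p_\epsilon\,\epsilon_i\epsilon_j$ and coskewness $S_{\max}\sum_\epsilon p_\epsilon\,\epsilon_1\epsilon_2\epsilon_3$.

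It then remains to choose the weights $p_\epsilon$ so that all three pairwise sums vanish while the triple-product sum is tunable. Splitting $\{-1,+1\}^3$ into the four ``even'' sign patterns (those with $\epsilon_1\epsilon_2\epsilon_3=+1$) and the four ``odd'' ones, I would put total mass $q$ spread uniformly over the even patterns and mass $1-q$ spread uniformly over the odd patterns. A short check shows that for each fixed pair $(i,j)$ the products $\epsilon_i\epsilon_j$ sum to zero over the four even patterns and, likewise, over the four odd patterns, so $\sum_\epsilon p_\epsilon\,\epsilon_i\epsilon_j=0$ for every pair and all pairwise correlations are exactly zero, whatever the base correlations $\rho^0_{ij}$ were. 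The triple product equals $+1$ on the even patterns and $-1$ on the odd ones, giving coskewness $S_{\max}\big(q-(1-q)\big)=(2q-1)S_{\max}$. Letting $q$ range over $[0,1]$ sweeps $(2q-1)$ over $[-1,1]$ and hence the coskewness over $[-S_{\max},S_{\max}]=[S_{\min},S_{\max}]$, precisely the admissible range.

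The only genuinely delicate points are the bookkeeping that the pairwise sums cancel for the chosen weights and the identification $S_{\min}=-S_{\max}$; both are light, so I expect the main effort to be setting up the sign-reflection mixture cleanly and confirming it preserves the prescribed symmetric marginals, rather than any hard estimate.
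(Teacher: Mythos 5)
Your proposal is correct, and it reaches the result by a route that is related to but genuinely distinct from the paper's. The paper also starts from the extremal copulas of Bernard et al.\ and also interpolates by mixing, but it uses only a two-component mixture: a Bernoulli$(\lambda)$ variable $B$ selects between the maximizing copula \eqref{max copula for odd dimension} and the minimizing copula \eqref{min copula for odd dimension} (which differ only in the component $U_3$), giving coskewness $\lambda\bar S+(1-\lambda)\underbar S$; the vanishing of the pairwise correlations is then \emph{verified by explicit computation}, using the internal structure of those copulas (the indicators $I=\mathds{1}_{U>1/2}$, $J=\mathds{1}_{V>1/2}$ and the identity $F_i^{-1}(U)=-F_i^{-1}(1-U)$), since each extremal copula already has zero pairwise correlations. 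Your construction instead mixes over all eight sign-reflections $(\epsilon_1Y_1,\epsilon_2Y_2,\epsilon_3Y_3)$ with weights uniform on each parity class, which \emph{forces} the pairwise correlations to cancel by the weight choice alone, independently of the base dependence; your bookkeeping (each product $\epsilon_i\epsilon_j$ sums to zero over the four even and over the four odd patterns) checks out, as does the preservation of the symmetric marginals and the identity $S_{\min}=-S_{\max}$. What your approach buys is robustness: you only need \emph{existence} of a maximizing dependence, not its explicit form, and the zero-correlation property is automatic. What the paper's approach buys is economy (two components rather than eight) and an explicit, simulable copula $C^{\lambda}$ that it reuses in its numerical sections. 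One small presentational point: you should state explicitly that the mixture components all have zero means and unit variances, so that second and third cross moments (and hence correlations and coskewness) mix linearly; this is immediate but is the step on which the whole linearity argument rests.
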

\begin{proof}
	In Section~\ref{any cosk zero corr}, we construct such a model.
\end{proof}

\begin{proposition}\label{zero cosk any corr} Let $(X_1,X_2, X_3)$ be a random vector with symmetric marginals. For every given set of correlations among the three variables, there exists a dependence model such that their coskewness is equal to zero.
\end{proposition}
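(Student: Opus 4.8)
The plan is to start from \emph{any} dependence model that realizes the prescribed correlations, and then apply a sign-symmetrization that annihilates the coskewness while leaving both the marginals and all pairwise correlations untouched. To set up, I would exploit the location-scale invariance of the correlation and coskewness matrices to reduce to centered, standardized variables: replacing $X_i$ by $(X_i-\mu_i)/\sigma_i$ changes neither $\rho_{ij}$ nor $S(X_1,X_2,X_3)$, so without loss of generality I may assume $\mu_i=0$, $\sigma_i=1$, and $X_i \stackrel{d}{=} -X_i$. In these terms the prescribed targets read $\rho_{ij}=\mathbb{E}(X_iX_j)$, and the object to be killed is $\mathbb{E}(X_1X_2X_3)$.

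Since the prescribed correlation matrix is assumed to be admissible for the given symmetric marginals, I would fix one dependence model $(X_1,X_2,X_3)$ that attains it; the remainder of the argument does not care how this base model is constructed. The crux is then to introduce an independent random sign $\epsilon$, taking values $+1$ and $-1$ each with probability $1/2$ and independent of $(X_1,X_2,X_3)$, and to define the new vector $\widetilde{\bm{X}}=(\epsilon X_1,\epsilon X_2,\epsilon X_3)$. Equivalently, $\widetilde{\bm{X}}$ is the equal-weight mixture of $(X_1,X_2,X_3)$ and $(-X_1,-X_2,-X_3)$.

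I would then verify three facts about $\widetilde{\bm{X}}$. First, each marginal is unchanged, because $\epsilon X_i \stackrel{d}{=} X_i$ by symmetry of $X_i$ (conditioning on the value of $\epsilon$ gives either $X_i$ or $-X_i$, both with law $F_i$). Second, every pairwise correlation is preserved, since $\mathbb{E}(\epsilon X_i \cdot \epsilon X_j)=\mathbb{E}(\epsilon^2 X_i X_j)=\mathbb{E}(X_iX_j)$ using $\epsilon^2=1$. Third, the coskewness vanishes, because $\mathbb{E}(\epsilon X_1\cdot\epsilon X_2\cdot\epsilon X_3)=\mathbb{E}(\epsilon^3)\,\mathbb{E}(X_1X_2X_3)=\mathbb{E}(\epsilon)\,\mathbb{E}(X_1X_2X_3)=0$, since $\epsilon^3=\epsilon$ has mean zero and is independent of the $X_i$. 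This delivers a dependence model with the required symmetric marginals, the required correlations, and zero coskewness.

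The main obstacle is not the symmetrization, which is essentially automatic, but the existence of the base model in the second step: one needs a coupling of the given symmetric marginals that realizes the prescribed correlation matrix. Under the reading that the ``given set of correlations'' means any \emph{admissible} correlation matrix, this coupling exists by definition and the sign flip completes the proof. To make the statement fully self-contained, I would instead invoke the feasibility characterization of correlation matrices under fixed marginals (positive semidefiniteness together with the bivariate Hoeffding--Fr\'echet bounds), or exhibit an explicit coupling tuned to the target correlations, after which the argument above finishes.
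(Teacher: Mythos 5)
Your proof is correct, but it takes a genuinely different route from the paper's. The paper proves this proposition as a corollary of its Proposition on the trivariate Gaussian vector: writing $(X_1,X_2,X_3)$ as explicit linear combinations of independent standard normals $Z_1,Z_2,Z_3$, the coskewness expands into terms of the form $\mathbb{E}Z_1^3$, $\mathbb{E}(Z_1^2Z_2)$, $\mathbb{E}(Z_1Z_2Z_3)$, etc., all of which vanish, while the correlations $\rho_{ij}$ are free parameters. Your argument instead starts from \emph{any} coupling of the given symmetric marginals attaining the target correlations and symmetrizes it by an independent random sign $\epsilon$, using $\epsilon^2=1$ to preserve the correlations and $\mathbb{E}(\epsilon^3)=\mathbb{E}(\epsilon)=0$ to annihilate the coskewness. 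Your approach buys strictly more: the paper's Gaussian construction literally establishes the claim only for Gaussian marginals (the stated proposition allows arbitrary symmetric marginals, so deducing it from the Gaussian case is a slight mismatch that your argument avoids), and it also covers exactly the set of correlation matrices attainable for the given marginals rather than only those realized by a Gaussian copula. What the paper's approach buys is an explicit, self-contained model with no need to posit a base coupling; your construction is conditional on the existence of that base model, which you correctly flag and which is harmless under the natural reading that the prescribed correlations are admissible. One presentational point: it would be worth stating explicitly that $\mathbb{E}(\epsilon X_i)=0$ and $\mathrm{Var}(\epsilon X_i)=1$, so that the preserved quantity $\mathbb{E}(\epsilon^2X_iX_j)$ really is the correlation and not just the raw cross moment; this is immediate but completes the verification.
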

\begin{proof}
	In Section~\ref{any corr zero cosk}, we construct such a model.
\end{proof}

\subsection{Arbitrary coskewness and zero correlation}\label{any cosk zero corr}
We recall that the range of possible values for coskewness depends on  the choice of marginal distributions. The following lemma recalls Theorems 3.1 and 3.2 of \cite{bernard2023coskewness}. We thus do not provide a proof. 
\begin{lemma}[Theorems 3.1 and 3.2 of \cite{bernard2023coskewness}] \label{symmetric margins and any copula}
	Let $X_i\sim F_i$ in which the $F_{i}$ are symmetric, $i=1,2,3$, and $U\sim U[0,1]$. The explicit bounds $\underbar{S}$ and $\bar{S}$ for the coskewness of $X_1$, $X_2$ and $X_3$ are 
	\begin{equation*}
\underbar{S}:=		-\mathbb{E}\left(G^{-1}_1(U)G^{-1}_2(U)G^{-1}_3(U)\right)\leq S(X_1, X_2, X_3) \leq \bar{S}:=\mathbb{E}\left(G^{-1}_1(U)G^{-1}_2(U)G^{-1}_3(U)\right),
	\end{equation*}
	in which $G_i$ is the distribution of $\lvert(X_i-\mu_i)/\sigma_i\rvert$. The maximum coskewness $\bar{S}$ is attained for  $\bar{S}=S(Y_1, Y_2, Y_3)$ in which  $Y_i=F_i^{-1}(U_i)$ with $U_i$ as in \begin{equation}\label{max copula for odd dimension}
		\begin{aligned}
			U_1&=U, \\
			U_{2}&=IJU+I(1-J)(1-U)+(1-I)JU+(1-I)(1-J)(1-U), \\
			U_3&=IJU+I(1-J)(1-U)+(1-I)J(1-U)+(1-I)(1-J)U,
		\end{aligned}
	\end{equation}
	where $I=\mathds{1}_{U>\frac{1}{2}}$, $J=\mathds{1}_{V>\frac{1}{2}}$, and $V\overset{d}{=}U[0, 1]$ is independent of $U$. The minimum coskewness
	$\underbar{S}$ is attained for  $\underbar{S}=S(H_1, H_2, H_3)$ in which  $H_i=F_i^{-1}(U_i)$ with $U_i$ as in \begin{equation}\label{min copula for odd dimension}
		\begin{aligned}
			U_1&=U, \\
			U_{2}&=IJU+I(1-J)(1-U)+(1-I)JU+(1-I)(1-J)(1-U), \\
			U_3&=IJ(1-U)+I(1-J)U+(1-I)JU+(1-I)(1-J)(1-U).
		\end{aligned}
	\end{equation} 
\end{lemma}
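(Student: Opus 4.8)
The plan is to standardize the variables and exploit symmetry to split each one into a sign and a magnitude. First I would set $Z_i=(X_i-\mu_i)/\sigma_i$, so that each $Z_i$ is symmetric about $0$ with unit variance and $S(X_1,X_2,X_3)=\mathbb{E}(Z_1Z_2Z_3)$. Because each $F_i$ is symmetric, the sign $\varepsilon_i:=\operatorname{sgn}(Z_i)\in\{-1,+1\}$ and the magnitude $R_i:=|Z_i|\sim G_i$ are independent \emph{within each margin}, with $\varepsilon_i$ uniform on $\{-1,+1\}$ (a direct computation from $P(\varepsilon_i=+1,R_i\le r)=F_{Z_i}(r)-\tfrac12=\tfrac12 P(R_i\le r)$). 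Writing $Z_i=\varepsilon_i R_i$ then gives the identity $S(X_1,X_2,X_3)=\mathbb{E}(\varepsilon_1\varepsilon_2\varepsilon_3\,R_1R_2R_3)$, valid for every copula.

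Next I would derive the two inequalities. Since $R_1R_2R_3\ge 0$ and $\varepsilon_1\varepsilon_2\varepsilon_3\in\{-1,+1\}$, pointwise $-R_1R_2R_3\le \varepsilon_1\varepsilon_2\varepsilon_3 R_1R_2R_3\le R_1R_2R_3$, so it suffices to bound $\mathbb{E}(R_1R_2R_3)$ from above over all couplings with the fixed margins $G_1,G_2,G_3$. For nonnegative variables I would use the layer-cake representation $\mathbb{E}(R_1R_2R_3)=\int_{[0,\infty)^3}\mathbb{P}(R_1>t_1,R_2>t_2,R_3>t_3)\,dt$ together with the Fréchet--Hoeffding upper bound $\mathbb{P}(R_1>t_1,R_2>t_2,R_3>t_3)\le \min_i\bar G_i(t_i)$; integrating the right-hand side returns exactly the value $\mathbb{E}(G_1^{-1}(U)G_2^{-1}(U)G_3^{-1}(U))$ attained under the comonotonic coupling $R_i=G_i^{-1}(U)$. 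Combining with the sign inequality gives $\underbar{S}\le S(X_1,X_2,X_3)\le\bar{S}$ for every model.

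It then remains to check that the copulas in \eqref{max copula for odd dimension} and \eqref{min copula for odd dimension} attain these bounds. I would first verify that each $U_i$ is uniform on $[0,1]$: conditionally on $(I,J)$ each equals either $U$ or $1-U$, and $(I,J)$ is independent of $U$, so $Y_i=F_i^{-1}(U_i)\sim F_i$ and the margins are correct. The crucial structural observation is that $|U_i-\tfrac12|=|U-\tfrac12|$ for $i=1,2,3$, whence every magnitude equals $R_i=G_i^{-1}(2|U-\tfrac12|)$ with the common driver $W:=2|U-\tfrac12|\sim U[0,1]$; thus $R_1,R_2,R_3$ are comonotonic and $\mathbb{E}(R_1R_2R_3)=\mathbb{E}(G_1^{-1}(U)G_2^{-1}(U)G_3^{-1}(U))$. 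Tracking the signs with $a=2I-1$ and $b=2J-1$, construction \eqref{max copula for odd dimension} produces $(\varepsilon_1,\varepsilon_2,\varepsilon_3)=(a,ab,b)$, so $\varepsilon_1\varepsilon_2\varepsilon_3=a^2b^2=1$ and $\bar S$ is reached, while construction \eqref{min copula for odd dimension} flips the third component to $-b$, giving $\varepsilon_1\varepsilon_2\varepsilon_3=-1$ and hence $\underbar{S}$.

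The routine parts are the standardization and the sign bookkeeping; the main obstacle is the upper bound on $\mathbb{E}(R_1R_2R_3)$, that is, establishing that comonotonicity maximizes the expected product of nonnegative variables with fixed margins. I expect the layer-cake plus Fréchet--Hoeffding argument to settle this cleanly in three dimensions, but one must additionally confirm that the two requirements hidden in the bound, namely a constant sign product and comonotone magnitudes, are simultaneously compatible with the within-margin independence of signs and magnitudes; the explicit constructions \eqref{max copula for odd dimension}--\eqref{min copula for odd dimension} are precisely what make this compatibility concrete.
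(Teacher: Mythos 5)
The paper does not actually prove this lemma: it is quoted verbatim from Theorems 3.1 and 3.2 of \cite{bernard2023coskewness} and the authors explicitly decline to give a proof. So your self-contained argument cannot be compared against an in-paper proof, only judged on its own terms. On those terms the architecture is right and matches what the cited result is built on: standardize, write $Z_i=\varepsilon_i R_i$ with $R_i\sim G_i$, observe $-R_1R_2R_3\le \varepsilon_1\varepsilon_2\varepsilon_3R_1R_2R_3\le R_1R_2R_3$ pointwise, bound $\mathbb{E}(R_1R_2R_3)$ by the comonotonic value via the layer-cake identity and the Fr\'echet--Hoeffding upper bound, and then check that the two explicit constructions make the magnitudes comonotonic (all driven by $W=|2U-1|$) while forcing $\varepsilon_1\varepsilon_2\varepsilon_3\equiv+1$ or $\equiv-1$. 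The sign bookkeeping $(\varepsilon_1,\varepsilon_2,\varepsilon_3)=(a,ab,b)$ resp.\ $(a,ab,-b)$ is correct, as is the identity $|U_i-\tfrac12|=|U-\tfrac12|$ in all four $(I,J)$ cases.

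There is one genuine error in the write-up: your justification that each $U_i$ is uniform rests on the claim that ``$(I,J)$ is independent of $U$,'' which is false, since $I=\mathds{1}_{U>\frac12}$ is a deterministic function of $U$. Indeed, conditionally on $J=1$ the variable $U_3$ in \eqref{max copula for odd dimension} equals $\max(U,1-U)$ and lives entirely in $[\tfrac12,1]$, so the conditional distributions are not uniform on $[0,1]$ and your stated argument does not go through. The conclusion is nevertheless true and the repair is short: write $U_i=\tfrac12(1+\varepsilon_i W)$ with $W=|2U-1|\sim U[0,1]$; conditionally on $W$, the sign $a=2I-1$ is $\pm1$ with probability $\tfrac12$ each (because $U=(1\pm W)/2$ are equally likely given $W$) and $b=2J-1$ is an independent fair sign, so each of $\varepsilon_1=a$, $\varepsilon_2=ab$, $\varepsilon_3=\pm b$ is a fair sign independent of $W$, which forces $U_i\sim U[0,1]$ and hence $Y_i\sim F_i$. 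Two further small points worth a sentence each in a polished version: the sign--magnitude decomposition needs a convention at $Z_i=0$ (harmless for continuous or atomless-at-zero symmetric laws), and the layer-cake/Fubini step should note that the upper bound $\mathbb{E}\left(G_1^{-1}(U)G_2^{-1}(U)G_3^{-1}(U)\right)$ is assumed finite, which is the integrability hypothesis under which the cited theorems are stated.
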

When $F_{i}$, $i=1,2,3$, are symmetric, the bounds can be computed explicitly; see Table~2 in \cite{bernard2023coskewness}.

We now construct a model in which the coskewness varies from $\underbar{S}$ to $\bar{S}$ but where the pairwise correlations of these variables are always equal to zero. To do so, let us introduce a mixture copula $C^{\lambda}$ for $\lambda\in[0,1]$ based on Lemma~\ref{symmetric margins and any copula}. We refer to \cite{lindsay1995mixture} for a study on mixture models. 

\begin{definition}[Mixture Copula]\label{mix copula defi}
	Let $X_i\sim F_i$, $i=1,2, 3$, in which the $F_i$ are symmetric, $U\overset{d}{=}V\sim U[0,1]$ such that $U\perp V$, $B\sim Bernoulli(\lambda)$ where $B$ is independent of $X_i$, $U$ and $V$, and $\lambda\in [0,1]$. Define two indicator functions $I=\mathds{1}_{U>\frac{1}{2}}$ and $J=\mathds{1}_{V>\frac{1}{2}}$. The dependence structure of $X_1= F_1^{-1}(U_1)$, $X_2= F_2^{-1}(U_2)$ and $X_3= F_3^{-1}(U_3^\lambda)$ is called a mixture copula $C^{\lambda}$ when the trivariate random vector $(U_1, U_2, U_3^\lambda)$ is given as
	\begin{equation}\label{mixture copula}
		\begin{aligned}
			U_1&=U,\\
			U_{2}&=IJU+I(1-J)(1-U)+(1-I)JU+(1-I)(1-J)(1-U), \\
			U_3^{\lambda}&=BU_3^M+(1-B)U_3^m,
		\end{aligned}
	\end{equation} where 
	\begin{align*}
		U_3^M &=IJU+I(1-J)(1-U)+(1-I)J(1-U)+(1-I)(1-J)U
	\end{align*}
	and
	\begin{align*}
		U_3^m &=IJ(1-U)+I(1-J)U+(1-I)JU+(1-I)(1-J)(1-U).
	\end{align*}
\end{definition} 
$U_j$ in \eqref{max copula for odd dimension} and $U_j$ in \eqref{min copula for odd dimension} are the same for $j=1,2$, thus $U_j^m=U_j^M=U_j^{\lambda}=U_j$. Note that \cite{mcneil2022attainability} use the same principle to mix $U$ and $1-U$ to study the property of Kendall's tau. 
\begin{proposition}\label{how mix work}
	Let $(X_1,X_2,X_3)$ be a trivariate random vector with symmetric marginals $F_i$ for $i=1,2,3$, i.e.\ $X_i\sim F_i$ and having the mixture copula $C^{\lambda}$. The coskewness $S(X_1,X_2,X_3)$ can take any values from the minimum to the maximum by varying the parameter $\lambda$ in the mixture copula $C^{\lambda}$. 
\end{proposition}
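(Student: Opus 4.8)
The plan is to show that the coskewness under the mixture copula $C^{\lambda}$ is an \emph{affine} function of $\lambda$ that interpolates linearly between $\underbar{S}$ and $\bar{S}$; the conclusion then follows because a linear map from $[0,1]$ onto $[\underbar{S},\bar{S}]$ is surjective.

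First I would check that the mixture construction preserves all three marginals, which is what keeps the coskewness well-defined and comparable across values of $\lambda$. Since $B\sim Bernoulli(\lambda)$ is independent of $U$ and $V$, and since both $U_3^M$ and $U_3^m$ are standard uniform (being the third components of the copulas \eqref{max copula for odd dimension} and \eqref{min copula for odd dimension}), the mixture $U_3^{\lambda}=BU_3^M+(1-B)U_3^m$ has distribution function $\lambda t+(1-\lambda)t=t$ on $[0,1]$, hence $U_3^{\lambda}\sim U[0,1]$ and $X_3=F_3^{-1}(U_3^{\lambda})\sim F_3$. Together with $U_1=U\sim U[0,1]$ and $U_2\sim U[0,1]$ (inherited from Lemma~\ref{symmetric margins and any copula}), this ensures that $\mu_3$, $\sigma_3$ and all normalizing constants entering the coskewness remain fixed as $\lambda$ varies.

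The core step is to condition on $B$. Setting $P:=(X_1-\mu_1)(X_2-\mu_2)(X_3-\mu_3)$ and using the independence of $B$ from $(U,V)$,
\begin{equation*}
\mathbb{E}[P]=\lambda\,\mathbb{E}[P\mid B=1]+(1-\lambda)\,\mathbb{E}[P\mid B=0].
\end{equation*}
On the event $\{B=1\}$ one has $U_3^{\lambda}=U_3^M$, so the conditional law of $(U_1,U_2,U_3^{\lambda})$ coincides with the maximizing copula \eqref{max copula for odd dimension}; on $\{B=0\}$ it coincides with the minimizing copula \eqref{min copula for odd dimension}. Dividing through by $\sigma_1\sigma_2\sigma_3$ and invoking Lemma~\ref{symmetric margins and any copula} to identify the two conditional coskewnesses with $\bar{S}$ and $\underbar{S}$, I obtain
\begin{equation*}
S(X_1,X_2,X_3)=\lambda\bar{S}+(1-\lambda)\underbar{S}.
\end{equation*}
This sends $\lambda=0$ to $\underbar{S}$ and $\lambda=1$ to $\bar{S}$ and, being continuous in $\lambda$, attains every intermediate value, proving the claim.

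I expect the only delicate point to be the marginal-preservation check: one must confirm that mixing the two extremal dependence structures leaves the marginal of $X_3$ untouched, for otherwise $\mu_3$ and $\sigma_3$ would drift with $\lambda$ and the clean linear interpolation would break down. Once the independence of $B$ is exploited both to preserve the uniform margin and to split the expectation, the remainder is immediate.
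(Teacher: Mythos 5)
Your proof is correct and follows essentially the same route as the paper's: decompose the expectation over $B$ using its independence from $(U,V)$, identify the two conditional coskewnesses with $\bar{S}$ and $\underbar{S}$ via Lemma~\ref{symmetric margins and any copula}, and conclude $S(X_1,X_2,X_3)=\lambda\bar{S}+(1-\lambda)\underbar{S}$. Your explicit verification that $U_3^{\lambda}\sim U[0,1]$ (so the marginals, and hence $\mu_3,\sigma_3$, do not drift with $\lambda$) is a worthwhile detail the paper leaves implicit, but it does not change the argument.
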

\begin{proof}
	Without loss of generality, we assume that $X_i$, $i=1,2,3$, have zero means and unit variances. With the mixture copula $C^{\lambda}$, we have
	\begin{equation*}
		\begin{aligned}
			X_1=&F_1^{-1}(U),\\
			X_2=&F_2^{-1}(U_2), \\
			X_3=&BF_3^{-1}(U_3^M)+(1-B)F_3^{-1}(U_3^m).
		\end{aligned}
	\end{equation*}
	Then, the coskewness of $X_1$, $X_2$ and $X_3$ is
	\begin{equation*}
		\begin{aligned}
			S(X_1, X_2, X_3)=&\mathbb{E}\left(F_1^{-1}(U)F_2^{-1}(U_2)\left(BF_3^{-1}(U_3^M)+(1-B)F_3^{-1}(U_3^m)\right)\right)\\
			=&\mathbb{E}\left(BF_1^{-1}(U)F_2^{-1}(U_2)F_3^{-1}(U_3^M)\right)+\mathbb{E}\left((1-B)F_1^{-1}(U)F_2^{-1}(U_2)F_3^{-1}(U_3^m)\right) \\
			=&\lambda \mathbb{E}\left(F_1^{-1}(U)F_2^{-1}(U_2)F_3^{-1}(U_3^M)\right)+(1-\lambda )\mathbb{E}\left(F_1^{-1}(U)F_2^{-1}(U_2)F_3^{-1}(U_3^m)\right)\\
			=&\lambda \bar{S}+(1-\lambda )\underbar{S}.
		\end{aligned}
	\end{equation*} The third equation holds because $B$ is independent of $X_i$, $U$ and $V$.
\end{proof}
The proof of Proposition~\ref{how mix work} shows that the mixture copula $C^\lambda$ leads to a coskewness that is a linear combination between the maximum coskewness and the minimum coskewness with weights driven by the parameter $\lambda$.  We then consider a trivariate random vector with symmetric marginals and the mixture copula $C^{\lambda}$. Thus, the mixture random variables $X_j=F_{j}^{-1}(U_j)$ for $j=1,2$ and $X_3^{\lambda}=F_{3}^{-1}(U_3^{\lambda})$, in which $U_j$ and $U_3^\lambda$ are in \eqref{mixture copula}. Let us denote this model as $(X_1, X_2, X_3^{\lambda})$. In Appendix~\ref{sim mix copula}, we provide a numerical method to simulate the dependence structure $C^{\lambda}$ and thus the model $(X_1, X_2, X_3^{\lambda})$.

\begin{figure}[!h]
	\centering
	\includegraphics[width=1\textwidth]{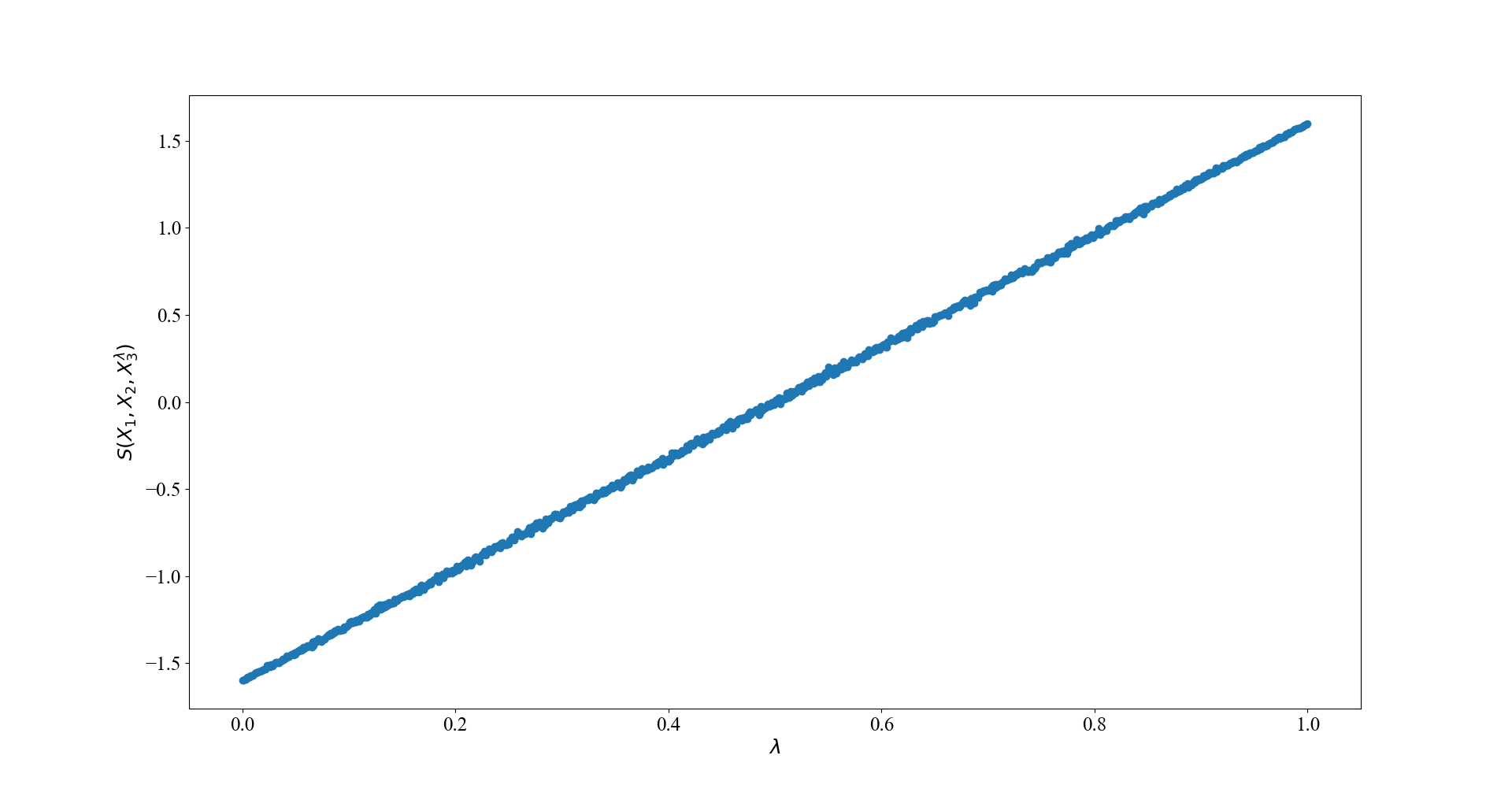}
	\caption{Effect of the parameter $\lambda$ on coskewness in the case of three normal variables. The coskewness is obtained by simulation with number of simulations $n=10^5$. The approximate minimum ($\lambda=0$) and maximum coskewness ($\lambda=1$) are $-1.59\approx -\frac{2\sqrt{2\pi}}{\pi}$ and $1.59\approx \frac{2\sqrt{2\pi}}{\pi}$, respectively.}
	\label{cosk vary}
\end{figure}


We proceed by simulating the mixture copula $C^{\lambda}=(U_1, U_2, U_3^{\lambda})$ using Algorithm~\ref{mix alg dim3} in Appendix A. Figure~\ref{cosk vary} illustrates that this model allows us to span all possible levels of coskewness. This result follows immediately by the construction of the mixture copula and by the continuity of coskewness with respect to the parameter $\lambda$. Moreover, the plot proves Proposition~\ref{how mix work} numerically. Given the behaviour of coskewness as a linear function of $\lambda$, we can then use $\lambda$ to represent the level of coskewness.

We can prove that the correlation coefficient is equal to zero in this mixture model. Hence, we obtain the following proposition.
\begin{proposition}\label{corr 3d prop}
	Let $(X_1, X_2, X_3)$ be a trivariate random vector with symmetric marginals $F_i$ for $i=1,2,3$, i.e.\ $X_i\sim F_i$ and having the mixture copula $C^{\lambda}$. The pairwise correlation coefficients of the three variables are equal to zero, while their coskewness takes arbitrary values $($depending on the value of $\lambda$$)$ between the minimum and the maximum.
\end{proposition}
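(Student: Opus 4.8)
The plan is to exploit the fact that, under the normalization of zero means and unit variances (which we may assume without loss of generality, since the correlation and coskewness matrices are invariant under location-scale transformations), each pairwise correlation coincides with the cross moment $\rho_{ij}=\mathbb{E}(X_iX_j)$. The single structural ingredient I would rely on is the symmetry of each marginal about its mean, which in quantile form reads $F_i^{-1}(1-u)=-F_i^{-1}(u)$. I would first rewrite the indicator-based definitions of $U_2$, $U_3^M$ and $U_3^m$ in \eqref{mixture copula} in the simpler piecewise form: $U_2=U$ when $J=1$ and $U_2=1-U$ when $J=0$; $U_3^M=U$ when $I=J$ and $U_3^M=1-U$ when $I\neq J$; and $U_3^m=1-U$ when $I=J$ and $U_3^m=U$ when $I\neq J$. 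Combined with the symmetry relation, this turns every occurrence of $F_i^{-1}(1-U)$ into $-F_i^{-1}(U)$, so all three products $X_1X_2$, $X_1X_3^\lambda$ and $X_2X_3^\lambda$ reduce to a signed multiple of quantities involving only $F_i^{-1}(U)$.

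For $\rho_{12}$, I would condition on $J$, which is a fair coin independent of $U$: the case $J=1$ contributes $\mathbb{E}(F_1^{-1}(U)F_2^{-1}(U))$ and the case $J=0$ contributes its negative, so the two cancel and $\rho_{12}=0$. For $\rho_{13}$, I would first use the independence of $B$ to write $\mathbb{E}(X_1X_3^\lambda)=\lambda\,\mathbb{E}(X_1X_3^M)+(1-\lambda)\,\mathbb{E}(X_1X_3^m)$, reducing the claim to showing $\mathbb{E}(X_1X_3^M)=\mathbb{E}(X_1X_3^m)=0$. Here I would condition on $U$ (hence on $I$) and use that $J$ remains a fair coin: the events $\{I=J\}$ and $\{I\neq J\}$ each have conditional probability $\tfrac12$, and they flip the sign of $F_3^{-1}$, so the conditional expectation vanishes pointwise in $U$.

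The delicate case, and the one I expect to be the main obstacle, is $\rho_{23}$, because $X_2$ and $X_3$ are driven by the \emph{same} coin $J$ (and by $I$ through $U$), so there is no independent coin to average over. After again splitting $\mathbb{E}(X_2X_3^\lambda)$ into the $X_3^M$ and $X_3^m$ pieces via the independence of $B$, I would evaluate the product $X_2X_3^M$ on each of the four cells of $(I,J)$. The key observation I would aim to establish is that the $J$-dependence cancels within the product: on all four cells one finds $X_2X_3^M=\mathrm{sgn}(2I-1)\,F_2^{-1}(U)F_3^{-1}(U)$, i.e.\ the product equals $+F_2^{-1}(U)F_3^{-1}(U)$ when $U>\tfrac12$ and $-F_2^{-1}(U)F_3^{-1}(U)$ when $U<\tfrac12$. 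Writing $g(U):=F_2^{-1}(U)F_3^{-1}(U)$ and noting that the symmetry relation gives $g(1-u)=g(u)$, the two half-line integrals $\int_{1/2}^1 g$ and $\int_0^{1/2} g$ are equal, so their signed difference is zero; an identical computation (with the roles of the signs reversed) handles $X_3^m$. Combining the three cases yields $\rho_{12}=\rho_{13}=\rho_{23}=0$, while Proposition~\ref{how mix work} already guarantees that the coskewness sweeps the full range $[\underbar{S},\bar{S}]$ as $\lambda$ varies, which completes the proof.
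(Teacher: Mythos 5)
Your proof is correct, and for $\rho_{12}$ and $\rho_{13}$ it is essentially the paper's argument: expand $X_2$ and $X_3^\lambda$ over the cells of $(I,J,B)$, use the quantile symmetry $F_i^{-1}(1-u)=-F_i^{-1}(u)$, and cancel by averaging over the independent coin $J$ (after first splitting off $B$ by independence). Where you genuinely add something is $\rho_{23}$, which the paper dismisses with ``similar and omitted.'' You are right that this case is \emph{not} structurally similar: $X_2$ and $X_3^\lambda$ share the coin $J$, so there is no independent randomization left to average over, and the cancellation must come from elsewhere. Your computation that on all four $(I,J)$-cells $X_2X_3^M=\mathrm{sgn}(2I-1)\,g(U)$ with $g(u)=F_2^{-1}(u)F_3^{-1}(u)$, together with the reflection identity $g(1-u)=g(u)$ forcing $\int_0^{1/2}g=\int_{1/2}^1 g$, is the correct mechanism (and the sign-reversed version handles $X_3^m$); I verified the four cells and they check out. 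So your write-up is both correct and more complete than the paper's: it makes explicit that zero correlation for the pair $(2,3)$ rests on a different cancellation (evenness of $g$ about $\tfrac12$) than for the pairs involving $X_1$ (a fair independent coin flipping the sign of one factor). The coskewness half of the claim is, as you note, exactly Proposition~\ref{how mix work}, which is also how the paper handles it.
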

\begin{proof}
	We only need to prove that correlations are equal to zero. Without loss of generality, we assume that all $X_i$ are symmetrically distributed random variables with zero means and unit variances. Observe that for an indicator function
	\begin{equation*}
		\mathds{1}_{A}(\omega)=\left\{\begin{aligned}
				&1, \qquad \text{ if } \omega \in A, \\
				&0, \qquad \text{otherwise,}
			\end{aligned}\right.
	\end{equation*}
	 we have $\mathds{1}_{A}f(x)+(1-\mathds{1}_{A})f(y)=f(\mathds{1}_{A}x+(1-\mathds{1}_{A})y)$ for all functions. Note that $I$, $J$, $B$, $1-I$, $1-J$ and $1-B$ in dependence structure $C^{\lambda}$ are all indicator functions as well as their products. Thus, under assumptions of symmetric marginals and dependence structure $C^{\lambda}$, we have
	\begin{equation*}
		\begin{aligned}
			X_1=&F_1^{-1}(U),\\
			X_2=&IJF_2^{-1}(U)+I(1-J)F_2^{-1}(1-U)+(1-I)JF_2^{-1}(U)+(1-I)(1-J)F_2^{-1}(1-U), \\
			X_3=&IJF_3^{-1}(BU+(1-B)(1-U))+I(1-J)F_3^{-1}(B(1-U)+(1-B)U)\\
			&+(1-I)JF_3^{-1}(B(1-U)+(1-B)U)+(1-I)(1-J)F_3^{-1}(BU+(1-B)(1-U)).
		\end{aligned}
	\end{equation*}
	Note that $\Phi^{-1}(U)$ in $X_1$ can be expanded as follow
	\begin{equation*}
		X_1=IJF_1^{-1}(U)+I(1-J)F_1^{-1}(U)+(1-I)JF_1^{-1}(U)+(1-I)(1-J)F_1^{-1}(U).
	\end{equation*}
	We now prove that $\rho_{12}$ equals zero using $F_2^{-1}(U)=-F_2^{-1}(1-U)$. We obtain
	\begin{equation*}
		\begin{aligned}
		\rho_{12}=\mathbb{E}(X_1X_2)=&\frac{1}{2}[\mathbb{E}(IF_1^{-1}(U)F_2^{-1}(U))+\mathbb{E}(IF_1^{-1}(U)F_2^{-1}(1-U))\\
		&+\mathbb{E}((1-I)F_1^{-1}(U)F_2^{-1}(U))+\mathbb{E}((1-I)F_1^{-1}(U)F_2^{-1}(1-U))] \\
		=& \frac{1}{2}[\mathbb{E}(IF_1^{-1}(U)F_2^{-1}(U))-\mathbb{E}(IF_1^{-1}(U)F_2^{-1}(U))\\
		&+\mathbb{E}((1-I)F_1^{-1}(U)F_2^{-1}(U))-\mathbb{E}((1-I)F_1^{-1}(U)F_2^{-1}(U))]=0.
	\end{aligned}
	\end{equation*}
	Similarly, we have $\rho_{13}=0$ since $$\mathbb{E}(IF_1^{-1}(U)F_3^{-1}(BU+(1-B)(1-U)))=-\mathbb{E}(IF_1^{-1}(U)F_3^{-1}(B(1-U)+(1-B)U))$$ and
	$$\mathbb{E}((1-I)F_1^{-1}(U)F_3^{-1}(B(1-U)+(1-B)U))=-\mathbb{E}((1-I)F_1^{-1}(U)F_3^{-1}(BU+(1-B)(1-U))).$$ The proof that $\rho_{23}=0$ is similar and omitted.
\end{proof}
Proposition~\ref{zero corr any cosk} follows as a corollary of Proposition~\ref{corr 3d prop}.

\subsection{Arbitrary correlation and zero coskewness}\label{any corr zero cosk}

\begin{proposition}\label{corr 3d prop 2}
	Let $(X_1, X_2, X_3)$ be a trivariate Gaussian random vector. The coskewness $S(X_1, X_2, X_3)$ of $X_1$, $X_2$ and $X_3$, equals zero for any possible values of correlation $\rho_{ij}$ of $X_i$ and $X_j$, $i,j=1,2,3$ and $i\neq j$.
\end{proposition}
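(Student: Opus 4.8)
The plan is to exploit the central symmetry of the multivariate normal distribution, which forces every odd-order central mixed moment to vanish irrespective of the correlation structure.

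First I would standardize. Since the coskewness matrix $M_3$ is invariant under location and scale transformations (as noted earlier in the excerpt), I may assume without loss of generality that each $X_i$ has zero mean and unit variance, so that $(X_1,X_2,X_3)$ is a zero-mean Gaussian vector whose covariance matrix coincides with the correlation matrix $\Sigma=(\rho_{ij})$. The target quantity then reduces to
\begin{equation*}
	S(X_1,X_2,X_3)=\mathbb{E}(X_1 X_2 X_3).
\end{equation*}

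The key step is the observation that a centered Gaussian vector is invariant under sign reversal in distribution: because $-\bm{X}\sim N(\bm{0},\Sigma)$ whenever $\bm{X}\sim N(\bm{0},\Sigma)$, we have $(X_1,X_2,X_3)\overset{d}{=}(-X_1,-X_2,-X_3)$ for every admissible correlation matrix $\Sigma$. Applying this distributional identity to the product of the three coordinates gives
\begin{equation*}
	\mathbb{E}(X_1 X_2 X_3)=\mathbb{E}\big((-X_1)(-X_2)(-X_3)\big)=-\mathbb{E}(X_1 X_2 X_3),
\end{equation*}
and hence $\mathbb{E}(X_1 X_2 X_3)=0$. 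Since $\Sigma$ was arbitrary (subject only to being a valid correlation matrix), the coskewness vanishes for all attainable values of $\rho_{12}$, $\rho_{13}$ and $\rho_{23}$.

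An equivalent route would invoke Isserlis' theorem (Wick's formula), which writes any mixed moment of a centered Gaussian vector as a sum over perfect pairings of the coordinates; for an odd number of factors, three here, no perfect pairing exists, so the moment is identically zero. I expect no genuine obstacle in either approach: the only point requiring a little care is the reduction to the standardized, zero-mean case, after which the symmetry argument is immediate and, crucially, uses nothing about the particular magnitudes of the correlations. This is precisely what makes the Gaussian family a clean counterpart to Proposition~\ref{zero corr any cosk}, delivering zero coskewness simultaneously with any prescribed correlations.
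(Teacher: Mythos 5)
Your proof is correct, but it takes a genuinely different route from the paper. The paper proves this by writing the standardized vector explicitly through a Cholesky-type representation in terms of independent standard normals $Z_1,Z_2,Z_3$ (their Equation~\eqref{normal rvs}), expanding the triple product, and observing that every resulting term involves either $\mathbb{E}Z_1^3$, $\mathbb{E}(Z_i^2 Z_j)$ or $\mathbb{E}(Z_1Z_2Z_3)$, all of which vanish by independence and the vanishing of odd Gaussian moments. You instead invoke the central symmetry $(X_1,X_2,X_3)\overset{d}{=}(-X_1,-X_2,-X_3)$ of the centered Gaussian law, which immediately forces $\mathbb{E}(X_1X_2X_3)=-\mathbb{E}(X_1X_2X_3)=0$; your preliminary standardization step is handled correctly, and the Isserlis/Wick alternative you mention is equally valid. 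Your argument is shorter and strictly more general --- it applies verbatim to any centrally symmetric (e.g., elliptical) distribution with finite third mixed moments, making clear that nothing Gaussian-specific is at work. What the paper's explicit representation buys in exchange is reusability: the same decomposition \eqref{normal rvs} is the backbone of the proof of Proposition~\ref{corr 3d prop 4} on rank coskewness under the Gaussian copula, where a symmetry argument alone would not suffice.
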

\begin{proof}
	We only need to prove that coskewness is equal to zero. It is well-known that the trivariate Gaussian random vector $(X_1, X_2, X_3)$ can be expressed as
	\begin{equation}\label{normal rvs}
		\begin{aligned}
			X_1&=\mu_1+\sigma_1Z_1,\\
			X_2&=\mu_2+\sigma_2\left(\rho_{12}Z_1+aZ_2\right),  \\
			X_3&=\mu_3+\sigma_3\left(\rho_{13}Z_1+\frac{\rho_{23}-\rho_{12}\rho_{13}}{a}Z_2+\frac{b}{a}Z_3\right),
		\end{aligned}
	\end{equation}
	where $a=\sqrt{1-\rho_{12}^2}$, $b=\sqrt{1-\rho_{12}^2-\rho_{13}^2-\rho_{23}^2+2\rho_{12}\rho_{13}\rho_{23}}$, and $Z_1$, $Z_2$ and $Z_3$ are independent standard normally distributed random variables. The coskewness of $X_1$, $X_2$ and $X_3$ is
	\begin{equation*}
		\begin{aligned}
		S(X_1,X_2,X_3)=&\mathbb{E}\left(Z_1\left(\rho_{12}Z_1+aZ_2\right)\left(\rho_{13}Z_1+\frac{\rho_{23}-\rho_{12}\rho_{13}}{a}Z_2+\frac{b}{a}Z_3\right)\right) \\
		=&\rho_{12}\rho_{13}\mathbb{E}Z_1^3+\left(\frac{\rho_{12}(\rho_{23}-\rho_{12}\rho_{13})}{a}+a\rho_{13}\right)\mathbb{E}(Z_1^2Z_2)\\
		&+\frac{b\rho_{12}}{a}\mathbb{E}(Z_1^2Z_3)+(\rho_{23}-\rho_{12}\rho_{13})\mathbb{E}(Z_1Z_2^2)+b\mathbb{E}(Z_1Z_2Z_3)\\
		=&0.
	\end{aligned}
	\end{equation*}
	The last equation for $S(X_1,X_2,X_3)$ holds because $\mathbb{E}Z_1=\mathbb{E}Z_2=\mathbb{E}Z_3=\mathbb{E}Z_1^3=0.$
\end{proof}
Proposition~\ref{zero cosk any corr} follows as a corollary of Proposition~\ref{corr 3d prop 2}.

\section{Rank correlation and rank coskewness}\label{three margins}
The range of possible coskewness generally depends on the choice of distributions. Thus in this section, we study the standardized rank coskewness \citep{bernard2023coskewness} as it always takes values in $[-1,\ 1]$.

We first recall the definitions of the standardized rank coskewness from \cite{bernard2023coskewness} and the rank correlation.
\begin{definition}[Standardized Rank Coskewness]\label{standardized rank coskewness}
	Let $X_i\sim F_{i}$, $i=1,2,3$, such that $F_i$ are strictly increasing and continuous. The standardized rank coskewness of $X_1$, $X_2$ and $X_3$ denoted by $RS(X_1,X_2,X_3)$ is defined as $RS(X_1,X_2,X_3)=\frac{4\sqrt{3}}{9}S(F_{1}(X_1), F_{2}(X_2), F_{3}(X_3))$. Hence,
	\begin{equation*} 
		RS(X_1,X_2,X_3)=32\mathbb{E}\left(\left(F_{1}(X_1)-\frac{1}{2}\right)\left(F_{2}(X_2)-\frac{1}{2}\right)\left(F_{3}(X_3)-\frac{1}{2}\right)\right).
		\label{equation for rank coskewness} 
	\end{equation*}
\end{definition}

\begin{definition}[Rank Correlation]\label{rank correlation}
	Let $X_i\sim F_{i}$, $i=1,2$, such that $F_i$ are strictly increasing and continuous. The Spearman's correlation of $X_1$ and $X_2$ denoted by $\rho_{12}^S$ is defined as
	\begin{equation*} 
		\rho_{12}^S=12\mathbb{E}\left(\left(F_{1}(X_1)-\frac{1}{2}\right)\left(F_{2}(X_2)-\frac{1}{2}\right)\right).
		\label{equation for rank correlation} 
	\end{equation*}
\end{definition}

The goal of Section~\ref{three margins} is to prove the following two propositions.
\begin{proposition}\label{any rs zero rho} Let $X_i\sim F_{i}$, $i=1,2,3$, such that $F_i$ are strictly increasing and continuous. For any value in $[-1,1]$, one can construct a dependence model such that standardized rank coskewness among $X_1$, $X_2$ and $X_3$ has that value. In contrast, the pairwise rank correlations among them are equal to zero.
\end{proposition}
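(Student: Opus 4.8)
The plan is to reduce the claim to the symmetric-marginal results of Section~\ref{three norm sec} by exploiting the fact that both the rank correlation and the standardized rank coskewness are copula functionals, i.e.\ they depend only on the dependence structure of $(X_1,X_2,X_3)$. Writing $U_i:=F_i(X_i)$, each $U_i\sim U[0,1]$ because $F_i$ is strictly increasing and continuous. Comparing Definition~\ref{rank correlation} with the formula for the Pearson correlation, and using $\operatorname{Var}(U_i)=1/12$, the Spearman correlation $\rho_{ij}^S$ equals exactly the Pearson correlation of the pair $(U_i,U_j)$; similarly, Definition~\ref{standardized rank coskewness} gives $RS(X_1,X_2,X_3)=\tfrac{4\sqrt3}{9}\,S(U_1,U_2,U_3)$, the (rescaled) coskewness of the uniform vector. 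Hence the marginals $F_i$ drop out entirely, and it suffices to build a suitable copula.

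First I would observe that the uniform law $U[0,1]$ is symmetric (about $1/2$), so the entire machinery of Section~\ref{any cosk zero corr} applies with $F_i=U[0,1]$. Concretely, I would set $X_i=F_i^{-1}(U_i)$, where $(U_1,U_2,U_3^\lambda)$ is the mixture copula $C^\lambda$ of Definition~\ref{mix copula defi}. By Proposition~\ref{corr 3d prop} applied to the uniform margins, the pairwise Pearson correlations of $(U_1,U_2,U_3^\lambda)$ are all zero, hence by the reduction above all pairwise rank correlations $\rho_{ij}^S$ vanish. By Proposition~\ref{how mix work}, the coskewness $S(U_1,U_2,U_3^\lambda)=\lambda\bar S+(1-\lambda)\underbar S$ is an affine (in particular continuous) function of $\lambda\in[0,1]$, so $RS=\tfrac{4\sqrt3}{9}\bigl(\lambda\bar S+(1-\lambda)\underbar S\bigr)$ sweeps out the whole interval $\bigl[\tfrac{4\sqrt3}{9}\underbar S,\ \tfrac{4\sqrt3}{9}\bar S\bigr]$.

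It then remains to identify this interval as $[-1,1]$, which is the only genuine computation in the argument. For $U_i\sim U[0,1]$ the standardized absolute deviation $|U_i-1/2|/\sigma_i$ is uniform on $[0,\sqrt3]$, so in Lemma~\ref{symmetric margins and any copula} one has $G_i^{-1}(u)=\sqrt3\,u$, and therefore $\bar S=\mathbb{E}\bigl((\sqrt3\,U)^3\bigr)=3\sqrt3\,\mathbb{E}(U^3)=\tfrac{3\sqrt3}{4}$, with $\underbar S=-\bar S$ by symmetry. Plugging in gives $\tfrac{4\sqrt3}{9}\bar S=1$ and $\tfrac{4\sqrt3}{9}\underbar S=-1$, so $RS$ ranges over exactly $[-1,1]$; choosing the $\lambda$ that solves $\tfrac{4\sqrt3}{9}\bigl(\lambda\bar S+(1-\lambda)\underbar S\bigr)=r$ realizes any prescribed $r\in[-1,1]$. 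I do not anticipate a real obstacle: once the rank quantities are recognized as copula functionals of a symmetric (uniform) vector, the statement is essentially a corollary of Propositions~\ref{how mix work} and~\ref{corr 3d prop}, and the only point to watch is that the normalization constant $\tfrac{4\sqrt3}{9}$ is precisely what converts the attainable coskewness range for uniform margins into the clean interval $[-1,1]$.
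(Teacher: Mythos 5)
Your proposal is correct and follows essentially the same route as the paper: the paper's proof (Proposition~\ref{corr 3d prop 3}) likewise observes that $F_i(X_i)\sim U[0,1]$ so the symmetric-marginal machinery and the mixture copula $C^{\lambda}$ apply directly to the uniforms, and then verifies that the rank correlations vanish. Your explicit computation that $G_i^{-1}(u)=\sqrt{3}\,u$, $\bar S=\tfrac{3\sqrt3}{4}$, and hence $\tfrac{4\sqrt3}{9}\bar S=1$ is a nice addition that the paper leaves implicit (deferring the attainment of the full range $[-1,1]$ to the cited reference), but it does not change the underlying argument.
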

\begin{proof}
	In Section~\ref{any rank cosk zero rank corr}, we construct such a model.
\end{proof}

\begin{proposition}\label{any rho zero rs} Let $X_i\sim F_{i}$, $i=1,2,3$, such that $F_i$ are strictly increasing and continuous. There exists a dependence model such that the pairwise rank correlations among $X_1$, $X_2$ and $X_3$ can take any possible values in $[-1,1]$, but their standardized rank coskewness is zero.
\end{proposition}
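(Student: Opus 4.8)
The plan is to replicate the Gaussian argument of Proposition~\ref{corr 3d prop 2}, but carried out at the level of the copula rather than the raw variables, exploiting the fact that both the standardized rank coskewness and the rank correlations are invariant under the marginal transformations $X_i\mapsto F_i(X_i)$. First I would reduce the statement to the construction of a single copula. Since each $F_i$ is strictly increasing and continuous, the variables $U_i:=F_i(X_i)$ are standard uniform on $[0,1]$, and by Definitions~\ref{standardized rank coskewness} and~\ref{rank correlation} both $RS(X_1,X_2,X_3)$ and the pairwise $\rho_{ij}^S$ depend on $(X_1,X_2,X_3)$ only through the joint law of $(U_1,U_2,U_3)$, i.e.\ through the copula. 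Hence the marginals $F_i$ are irrelevant, and it suffices to exhibit one copula with zero rank coskewness but freely tunable rank correlations; the required model is then $X_i=F_i^{-1}(U_i)$.

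For that copula I would take the Gaussian copula, the rank analogue of \eqref{normal rvs}: let $(Z_1,Z_2,Z_3)$ be a centered trivariate normal vector with correlation matrix $R=(r_{ij})$ and set $U_i=\Phi(Z_i)$, where $\Phi$ is the standard normal c.d.f. The rank correlations are then governed by the classical identity $\rho_{ij}^S=\frac{6}{\pi}\arcsin\!\left(\frac{r_{ij}}{2}\right)$ valid for the Gaussian copula. As $r_{ij}$ sweeps $[-1,1]$ (with $R$ kept positive semidefinite), each $\rho_{ij}^S$ sweeps $[-1,1]$ — the endpoints $r_{ij}=\pm1$ yield $\rho_{ij}^S=\pm1$ — so the pairwise rank correlations can attain any admissible values in $[-1,1]$.

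It remains to show the rank coskewness vanishes for every choice of $R$, and here I would use a symmetry argument. Writing $g(z):=\Phi(z)-\tfrac12$, which is an odd function because $\Phi(-z)=1-\Phi(z)$, Definition~\ref{standardized rank coskewness} gives $RS(X_1,X_2,X_3)=32\,\mathbb{E}\!\left(g(Z_1)g(Z_2)g(Z_3)\right)$. A centered Gaussian vector is invariant in law under the sign flip $(Z_1,Z_2,Z_3)\mapsto(-Z_1,-Z_2,-Z_3)$, and the integrand, being a product of three odd functions, changes sign under this map. Therefore its expectation equals its own negative and so must be zero, independently of $R$, which is exactly the rank counterpart of the mechanism behind Proposition~\ref{corr 3d prop 2}.

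The main content of the proof is this last observation: the central symmetry of the Gaussian copula forces the rank coskewness to zero no matter how the rank correlations are set, which is precisely what decouples the two quantities. The only step requiring some care is the spanning claim for the rank correlations — invoking the $\arcsin$ relation, and, if one wants all three pairs simultaneously, respecting that $R$ must lie in the positive semidefinite cone (so that the attainable triples form the image of that cone under the elementwise map $r\mapsto\frac{6}{\pi}\arcsin(r/2)$).
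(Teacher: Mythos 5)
Your proposal is correct, and it reaches the same destination as the paper — the Gaussian copula, with the Pearson/Spearman relation $\rho_{ij}^S=\frac{6}{\pi}\arcsin(r_{ij}/2)$ handling the spanning of the rank correlations over $[-1,1]$ — but the central step is argued quite differently. The paper proves $RS(X_1,X_2,X_3)=0$ by brute force: it expands the rank coskewness into pairwise and triple moments, evaluates $\mathbb{E}(F_i(X_i)F_j(X_j))$ via the arcsine law, rewrites the triple moment $\mathbb{E}(\Phi(H_1)\Phi(H_2)\Phi(H_3))$ as the orthant probability $\mathbb{P}(Y_1\le 0,Y_2\le 0,Y_3\le 0)$ of an auxiliary trivariate normal with correlations $\rho_{ij}/2$, and then invokes the closed-form orthant formula of Rose and Smith to watch all the $\arcsin$ terms cancel. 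Your parity argument — $g(z)=\Phi(z)-\frac12$ is odd and a centered Gaussian vector is invariant under $(Z_1,Z_2,Z_3)\mapsto(-Z_1,-Z_2,-Z_3)$, so $\mathbb{E}\bigl(g(Z_1)g(Z_2)g(Z_3)\bigr)$ equals its own negative — replaces all of that with one line, requires no knowledge of trivariate orthant probabilities, and in fact shows the stronger statement that the rank coskewness vanishes for any copula generated by a centrally symmetric vector through odd transforms. What the paper's computation buys in exchange is an explicit value for the triple moment $\mathbb{E}(F_1(X_1)F_2(X_2)F_3(X_3))$, which is of some independent interest but is not needed for the proposition. Your preliminary reduction (marginals are irrelevant because both $RS$ and $\rho_{ij}^S$ are copula functionals) and your caveat that simultaneous attainment of all three rank correlations is constrained by positive semidefiniteness of $R$ are both correct and, if anything, stated more carefully than in the paper.
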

\begin{proof}
	In Section~\ref{any rank corr zero rank cosk}, we construct such a model.
\end{proof}

\subsection{Arbitrary rank coskewness and zero rank correlation}\label{any rank cosk zero rank corr}
\begin{proposition}\label{corr 3d prop 3}
	Let $(X_1, X_2, X_3)$ be a trivariate random vector with strictly increasing and continuous marginals $F_i$, $i=1,2,3$, and the mixture copula $C^{\lambda}$. The standardized rank coskewness can take any possible values in $[-1,1]$, while the rank correlation coefficients $\rho_{ij}^S$ of $X_i$ and $X_j$, $j=1,2,3$ and $j\neq i$, are equal to zero.
\end{proposition}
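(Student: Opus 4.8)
The plan is to reduce the claim to the symmetric-marginal results of Section~\ref{three norm sec} by passing to the probability-integral transforms $U_i=F_i(X_i)$. Since each $F_i$ is strictly increasing and continuous, $U_i=F_i(X_i)$ is uniform on $[0,1]$, and both the standardized rank coskewness and the rank correlation depend on the law of $(X_1,X_2,X_3)$ only through the joint law of $(U_1,U_2,U_3^\lambda)$, i.e.\ only through the copula $C^\lambda$. The decisive observation is that the uniform law on $[0,1]$ is symmetric about its mean $\tfrac{1}{2}$, so that $(U_1,U_2,U_3^\lambda)$ is exactly a trivariate vector with symmetric (uniform) marginals carrying the mixture copula $C^\lambda$. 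Hence the results already proved for symmetric marginals apply verbatim to these uniformized variables.

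First I would record the two scaling identities. For the uniform marginals one has mean $\tfrac{1}{2}$ and variance $\tfrac{1}{12}$, so Definition~\ref{rank correlation} gives $\rho_{ij}^S=12\,\mathbb{E}((U_i-\tfrac{1}{2})(U_j-\tfrac{1}{2}))$, which is precisely the ordinary Pearson correlation $\rho_{ij}(U_i,U_j)$ of the uniform variables. Likewise, Definition~\ref{standardized rank coskewness} gives $RS(X_1,X_2,X_3)=\tfrac{4\sqrt{3}}{9}\,S(U_1,U_2,U_3^\lambda)$, the ordinary coskewness of the uniform variables up to the fixed constant $\tfrac{4\sqrt{3}}{9}$. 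Thus the two rank quantities are nothing but the correlation and the coskewness of symmetric variables carrying the copula $C^\lambda$.

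The zero-rank-correlation part then follows immediately from Proposition~\ref{corr 3d prop}: applied to the symmetric uniform marginals it yields $\rho_{ij}(U_i,U_j)=0$ for every pair, hence $\rho_{ij}^S=0$. (Equivalently one could rerun the symmetry argument of that proof using $G^{-1}(u)=-G^{-1}(1-u)$ for the centred uniform, but invoking the proposition is cleaner.) For the coskewness part, Proposition~\ref{how mix work} applied to the uniform marginals gives $S(U_1,U_2,U_3^\lambda)=\lambda\,\bar{S}_U+(1-\lambda)\,\underline{S}_U$, so as $\lambda$ ranges over $[0,1]$ the value sweeps the whole interval $[\underline{S}_U,\bar{S}_U]$; consequently $RS$ sweeps $[\tfrac{4\sqrt{3}}{9}\underline{S}_U,\ \tfrac{4\sqrt{3}}{9}\bar{S}_U]$.

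The one genuine computation, and the main obstacle, is to evaluate these bounds explicitly and to check that the normalization collapses them to $\pm 1$. Using Lemma~\ref{symmetric margins and any copula} with uniform marginals, the distribution $G_i$ of $\lvert(U_i-\tfrac{1}{2})/\sigma_i\rvert$ is uniform on $[0,\sqrt{3}]$, so $G_i^{-1}(u)=\sqrt{3}\,u$ and $\bar{S}_U=\mathbb{E}((\sqrt{3}\,U)^3)=3\sqrt{3}\,\mathbb{E}(U^3)=\tfrac{3\sqrt{3}}{4}$, with $\underline{S}_U=-\tfrac{3\sqrt{3}}{4}$ by symmetry. Plugging in, $\tfrac{4\sqrt{3}}{9}\cdot\tfrac{3\sqrt{3}}{4}=1$, so $RS$ attains every value in $[-1,1]$. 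This is precisely why the constant $\tfrac{4\sqrt{3}}{9}$ is built into Definition~\ref{standardized rank coskewness}: it is calibrated so that the uniform marginals, which realize the extremal case, normalize the rank coskewness to the full range $[-1,1]$. Combining the two parts proves the proposition.
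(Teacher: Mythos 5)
Your proof is correct and rests on the same core idea as the paper's: pass to the probability-integral transforms $F_i(X_i)$, observe that these are uniform (hence symmetric) and still carry the copula $C^{\lambda}$, and then invoke the symmetric-marginal machinery of Section~\ref{three norm sec}. The only differences are organizational and in your favor: where the paper recomputes $\rho_{12}^S$ and $\rho_{13}^S$ from scratch, you cite Proposition~\ref{corr 3d prop} directly; and where the paper dismisses the coskewness half with ``we only need to prove that the rank correlation coefficients are equal to zero,'' you actually verify it, computing $G_i^{-1}(u)=\sqrt{3}\,u$, $\bar{S}_U=\tfrac{3\sqrt{3}}{4}$, and checking that $\tfrac{4\sqrt{3}}{9}\cdot\tfrac{3\sqrt{3}}{4}=1$ so that $RS$ sweeps all of $[-1,1]$. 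That last calculation is a genuine step the paper leaves implicit (deferring to the normalization in \cite{bernard2023coskewness}), so your version is the more self-contained of the two.
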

\begin{proof}
	We only need to prove that the rank correlation coefficients are equal to zero. Lemma~\ref{symmetric margins and any copula}, in this case, still holds because $F_i(X_i)$ are distributed as standard uniform. Thus, we have
	\begin{equation*}
		\begin{aligned}
			F_1(X_1)=&U,\\
			F_2(X_2)=&IJU+I(1-J)(1-U)+(1-I)JU+(1-I)(1-J)(1-U), \\
			F_3(X_3)=&IJ(BU+(1-B)(1-U))+I(1-J)(B(1-U)+(1-B)U)\\
			&+(1-I)J(B(1-U)+(1-B)U)+(1-I)(1-J)(BU+(1-B)(1-U)).
		\end{aligned}
	\end{equation*}
	We now prove that rank correlation is equal to zero. It is
	\begin{equation*}
		\begin{aligned}
			\rho_{12}^S=&12\mathbb{E}\left(\left(F_1(X_1)-\frac{1}{2}\right)\left(F_2(X_2)-\frac{1}{2}\right)\right)\\
			=&6\left[\mathbb{E}\left(IU^2\right)+\mathbb{E}\left(IU\left(1-U\right)\right)+\mathbb{E}\left(\left(1-I\right)U^2\right)+\mathbb{E}\left(\left(1-I\right)U\left(1-U\right)\right)-\frac{1}{2}\right] \\
			=&6\left[\mathbb{E}\left(IU\right)+\mathbb{E}\left(\left(1-I\right)U\right)-\frac{1}{2}\right] =0.
		\end{aligned}
	\end{equation*}
	Similarly, we have \begin{equation*}
		\begin{aligned}
			\rho_{13}^S=&12\mathbb{E}\left(\left(F_1(X_1)-\frac{1}{2}\right)\left(F_3(X_3)-\frac{1}{2}\right)\right)\\
			=&6\left[\mathbb{E}\left(I(BU^2+(1-B)(1-U)U)+I(B(1-U)U+(1-B)U^2)\right.\right.\\
			&\left.\left.+(1-I)(B(1-U)U+(1-B)U^2)+(1-I)(BU^2+(1-B)(1-U)U)\right)-\frac{1}{2}\right] \\
			=&6\left[\mathbb{E}\left(BU^2+(1-B)(1-U)U+B(1-U)U+(1-B)U^2\right)-\frac{1}{2}\right] \\
			=&6\left[\mathbb{E}\left((1-B)U+BU\right)-\frac{1}{2}\right]=0.
		\end{aligned}
	\end{equation*} $\rho_{23}^S=0$ can be similarly proven.
\end{proof}
Proposition~\ref{any rs zero rho} follows as a corollary of Proposition~\ref{corr 3d prop 3}.
\subsection{Arbitrary rank correlation and zero rank coskewness}\label{any rank corr zero rank cosk}
\begin{proposition}\label{corr 3d prop 4}
	Let $(X_1, X_2, X_3)$ be a trivariate random vector with strictly increasing and continuous marginals $F_i$, $i=1,2,3,$ and Gaussian copula. The standardized rank coskewness $RS(X_1, X_2, X_3)$ of $X_1$, $X_2$ and $X_3$ is equal to zero for any possible values of rank correlation  $\rho_{ij}^S$ of $X_i$ and $X_j$, $i,j=1,2,3$ and $i\neq j$.
\end{proposition}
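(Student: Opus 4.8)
The plan is to exploit that standardized rank coskewness depends only on the copula, combined with the radial symmetry of the Gaussian copula. First I would observe that since each $F_i$ is strictly increasing and continuous, $F_i(X_i)\sim U[0,1]$, so by Definition~\ref{standardized rank coskewness},
\[
RS(X_1,X_2,X_3)=32\,\mathbb{E}\!\left(\left(U_1-\tfrac12\right)\left(U_2-\tfrac12\right)\left(U_3-\tfrac12\right)\right),
\]
where $(U_1,U_2,U_3):=(F_1(X_1),F_2(X_2),F_3(X_3))$ is precisely the copula of $(X_1,X_2,X_3)$. The marginals $F_i$ therefore drop out, and it suffices to evaluate this expectation under the Gaussian copula.

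Next I would use the standard representation of the Gaussian copula, writing $(U_1,U_2,U_3)=(\Phi(Z_1),\Phi(Z_2),\Phi(Z_3))$, where $\Phi$ is the standard normal CDF and $(Z_1,Z_2,Z_3)$ is a centered trivariate Gaussian vector whose correlation matrix is the one inducing the prescribed rank correlations $\rho_{ij}^S$. Putting $g(z):=\Phi(z)-\tfrac12$, the object to control becomes $\mathbb{E}(g(Z_1)g(Z_2)g(Z_3))$.

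The decisive step is a parity argument. Since $\Phi(-z)=1-\Phi(z)$, the function $g$ is odd, i.e.\ $g(-z)=-g(z)$. Because $(Z_1,Z_2,Z_3)$ is centered Gaussian, it is equal in distribution to $(-Z_1,-Z_2,-Z_3)$, and hence
\[
\mathbb{E}\!\left(g(Z_1)g(Z_2)g(Z_3)\right)=\mathbb{E}\!\left(g(-Z_1)g(-Z_2)g(-Z_3)\right)=-\,\mathbb{E}\!\left(g(Z_1)g(Z_2)g(Z_3)\right),
\]
so this expectation equals its own negative and must vanish. Thus $RS(X_1,X_2,X_3)=0$ for every Gaussian correlation matrix, hence for every admissible set of rank correlations.

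I do not expect a genuine obstacle; the argument is a clean cancellation driven by the oddness of $g$ and the symmetry of the Gaussian law. The only points deserving care are the reduction to the copula in the first step (legitimate because the $F_i$ are continuous and strictly increasing, so the probability integral transform holds exactly) and the remark that one need not invert the relation $\rho_{ij}^S=\frac{6}{\pi}\arcsin(\rho_{ij}/2)$ between rank and linear correlation for the Gaussian copula: the coskewness vanishes simultaneously for all correlation matrices, and therefore for every rank correlation they can realize.
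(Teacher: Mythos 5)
Your proof is correct, and it takes a genuinely different and more economical route than the paper. The paper's argument is computational: it reduces $\mathbb{E}(F_1(X_1)F_2(X_2)F_3(X_3))$ to the orthant probability $\mathbb{P}(Y_1\le 0, Y_2\le 0, Y_3\le 0)$ for a trivariate normal vector with correlations $\rho_{ij}/2$, invokes the explicit formula $\frac{1}{4\pi}\sum \arcsin(\rho_{ij}/2)+\frac{1}{8}$ of Rose and Smith, handles the pairwise terms via Pearson's arcsine relation between $\rho_{ij}^S$ and $\rho_{ij}$, and checks that everything cancels. You instead observe that $g(z)=\Phi(z)-\tfrac12$ is odd and that a centered Gaussian vector satisfies $(Z_1,Z_2,Z_3)\overset{d}{=}(-Z_1,-Z_2,-Z_3)$, so $\mathbb{E}(g(Z_1)g(Z_2)g(Z_3))$ equals its own negative and vanishes; no explicit formula is needed. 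Your parity argument is not only shorter but strictly more general: it shows that the standardized rank coskewness is zero for \emph{any} radially symmetric copula, i.e.\ any copula invariant under $(u_1,u_2,u_3)\mapsto(1-u_1,1-u_2,1-u_3)$ (e.g.\ the $t$-copula), with the Gaussian case as a special instance. What the paper's computation buys in exchange is the explicit intermediate quantities --- the values of $\mathbb{E}(F_i(X_i)F_j(X_j))$ and the link between rank and linear correlation --- which make the ``any possible values of $\rho_{ij}^S$'' part of the claim concrete; your closing remark correctly notes that this inversion is not actually needed for the vanishing statement. The only hygiene points in your write-up, both of which you already flag, are the probability integral transform (valid since the $F_i$ are continuous and strictly increasing) and boundedness of $g$, which makes the expectation manifestly finite.
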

\begin{proof} Recall Equation~\eqref{normal rvs}, we have $F_1(X_1)=\Phi(H_1)$, $F_2(X_2)=\Phi(H_2)$ and $F_3(X_3)=\Phi(H_3)$, where
	\begin{equation*}
		\begin{aligned}
			H_1&=Z_1,\\
			H_2&=\rho_{12}Z_1+\sqrt{1-\rho_{12}^2}Z_2,  \\
			H_3&=\rho_{13}Z_1+\frac{\rho_{23}-\rho_{12}\rho_{13}}{\sqrt{1-\rho_{12}^2}}Z_2+\frac{\sqrt{1-\rho_{12}^2-\rho_{13}^2-\rho_{23}^2+2\rho_{12}\rho_{13}\rho_{23}}}{\sqrt{1-\rho_{12}^2}}Z_3.
		\end{aligned}
	\end{equation*} 
	\cite{pearson1907further} proves the relationship between the Pearson correlation and the Spearman rank correlation under Gaussian copula, i.e.\ for $i,j=1,2,3$ and $i\neq j$,
	\begin{equation*}
		\rho_{ij} = 2\sin\left(\frac{\pi}{6}\rho_{ij}^S\right).
	\end{equation*}
	Thus,
	\begin{equation*}
		\rho_{ij}^S = \frac{6}{\pi}\arcsin\left(\frac{\rho_{ij}}{2}\right)\in [-1,1].
	\end{equation*}
	This implies $\mathbb{E}(F_i(X_i)F_j(X_j))=\mathbb{E}(\Phi(H_i)\Phi(H_j))=\frac{1}{2\pi}\arcsin\left(\frac{\rho_{ij}}{2}\right)+\frac{1}{4}$. 
	The rank coskewness of $X_1$, $X_2$ and $X_3$ is
	\begin{equation*}
		\begin{aligned}
			RS(X_1,X_2,X_3)=&32\mathbb{E}\left(\left(F_1(X_1)-\frac{1}{2}\right)\left(F_2(X_2)-\frac{1}{2}\right)\left(F_3(X_3)-\frac{1}{2}\right)\right) \\
			=&32\left[\mathbb{E}(F_1(X_1)F_2(X_2)F_3(X_3))-\frac{1}{2}\mathbb{E}(F_1(X_1)F_2(X_2))-\frac{1}{2}\mathbb{E}(F_1(X_1)F_3(X_3))\right.\\
			&\left.-\frac{1}{2}\mathbb{E}(F_2(X_2)F_3(X_3))+\frac{1}{4}\right]\\
			=&32\left[\mathbb{E}(F_1(X_1)F_2(X_2)F_3(X_3))-\frac{1}{4\pi}\arcsin\left(\frac{\rho_{12}}{2}\right)-\frac{1}{4\pi}\arcsin\left(\frac{\rho_{13}}{2}\right)\right.\\
			&\left.-\frac{1}{4\pi}\arcsin\left(\frac{\rho_{23}}{2}\right)-\frac{1}{8}\right].
		\end{aligned}
	\end{equation*}
	We assume $f_{H_1, H_2, H_3}$ is the joint density function of $(H_1, H_2, H_3)$. Define $X_1'$, $X_2'$ and $X_3'$ as three independent standard normally distributed random variables such that they are independent of $X_1$, $X_2$ and $X_3$. Let $(Y_1,Y_2,Y_3)=\left(\frac{X_1'-H_1}{\sqrt{2}},\frac{X_2'-H_2}{\sqrt{2}},\frac{X_3'-H_3}{\sqrt{2}}\right)$. $(Y_1,Y_2,Y_3)$ is also trivariate normal with zero means and unit variances, and the pairwise correlation coefficients are equal to $\frac{\rho_{ij}}{2}$.
	Then,
	\begin{equation*}
		\begin{aligned}
			\mathbb{E}(F_1(X_1)F_2(X_2)F_3(X_3))=&
			\mathbb{E}(\Phi(H_1)\Phi(H_2)\Phi(H_3)) \\
			=&\int_{\mathbb{R}}\int_{\mathbb{R}}\int_{\mathbb{R}}\Phi(x_1)\Phi(x_2)\Phi(x_3)f_{H_1,H_2,H_3}(x_1,x_2,x_3)dx_1dx_2dx_3\\
			=&\int_{\mathbb{R}}\int_{\mathbb{R}}\int_{\mathbb{R}}\mathbb{P}(X_1'\leq x_1, X_2'\leq x_2, X_3'\leq x_3)f_{H_1,H_2,H_3}(x_1,x_2,x_3)dx_1dx_2dx_3 \\
			=&\int_{\mathbb{R}}\int_{\mathbb{R}}\int_{\mathbb{R}}\mathbb{P}(X_1'\leq x_1, X_2'\leq x_2, X_3'\leq x_3\vert H_1=x_1, H_2=x_2, H_3=x_3)\\
			&f_{H_1,H_2,H_3}(x_1,x_2,x_3)dx_1dx_2dx_3 \\
			=&\mathbb{P}(X_1'\leq H_1, X_2'\leq H_2, X_3'\leq H_3)\\
			=&\mathbb{P}\left(\frac{X_1'-H_1}{\sqrt{2}}\leq 0,\frac{X_2'-H_2}{\sqrt{2}}\leq 0 ,\frac{X_3'-H_3}{\sqrt{2}}\leq 0\right)\\
			=&\mathbb{P}\left(Y_1\leq 0,Y_2\leq 0,Y_3\leq 0\right).
		\end{aligned}
	\end{equation*}
	\cite{rose2002mathematical} prove that 
	\begin{equation*}
		\mathbb{P}\left(Y_1\leq 0,Y_2\leq 0,Y_3\leq 0\right)=\frac{1}{4\pi}\left[\arcsin\left(\rho_{Y_1Y_2}\right)+\arcsin\left(\rho_{Y_1Y_3}\right)+\arcsin\left(\rho_{Y_2Y_3}\right)\right]+\frac{1}{8}.
	\end{equation*}
	Therefore, $RS(X_1,X_2,X_3)=0$.
\end{proof}
Proposition~\ref{any rho zero rs} follows as a corollary of Proposition~\ref{corr 3d prop 4}. Let us illustrate this feature with more examples of rank coskewness in the cases of strictly increasing and continuous marginals and various copulas.
\begin{example}\label{uniform margins example}
	Rank coskewness and rank correlation for strictly increasing and continuous marginals under various dependence assumptions.
	
	\noindent Assume that $X_i\sim F_i$ for $i=1,2,3$ and $U\sim U[0,1]$.
	\begin{enumerate}[(1)]
		\item With the comonotonic copula, we can know $F_{i}(X_i)=U$. The rank coskewness is  $$RS(X_1,X_2,X_3)=32\mathbb{E}\left(\left(U-\frac{1}{2}\right)^3\right)=0.$$
		The rank correlations are $\rho_{12}^S=\rho_{13}^S=\rho_{23}^S=1$.
		\item From \cite{ruschendorf2002variance}, the mixing copula is the dependence such that $\sum_{i=1}^{3}U_i=\frac{3}{2}$ where
		\begin{equation*}
			\begin{aligned}
				U_1&=U; \\
				U_2&=\left\{
				\begin{aligned}
					&-2U+1,  &\text{if } 0\le U \le \frac{1}{2}; \\
					&-2U+2,    &\text{if } \frac{1}{2}\le U \le 1;
				\end{aligned}
				\right. \\
				U_3&=\left\{
				\begin{aligned}
					&U+\frac{1}{2},  &\text{if } 0\le U \le \frac{1}{2}; \\
					&U-\frac{1}{2},    &\text{if } \frac{1}{2}\le U \le 1.
				\end{aligned}
				\right. \\
			\end{aligned}
		\end{equation*}
		Under the mixing copula, we find that the rank coskewness of $X_1$, $X_2$ and $X_3$ is given as
		\begin{equation*}
			\begin{aligned}
				{RS(X_1, X_2, X_3)}
				=&32\left[\mathbb{E}\left(\left(U-\frac{1}{2}\right)\left(-2U+\frac{1}{2}\right)U\mathds{1}_{U\in [0,\frac{1}{2}]}\right)\right. \\
				&\left.+\mathbb{E}\left(\left(U-\frac{1}{2}\right)\left(-2U+\frac{3}{2}\right)\left(U-1\right)\mathds{1}_{U\in [\frac{1}{2},1]}\right)\right] \\
				=&0.
			\end{aligned}
		\end{equation*}
		The rank correlations are $\rho_{12}^S=-1$, $\rho_{13}^S=1$ and $\rho_{23}^S=-1$.
		\item Under the independence copula, we have that $F_i(X_i)=U_i$, in which $X_1$, $X_2$ and $X_3$ are independent. Moreover, the rank coskewness is $$RS(X_1,X_2,X_3)=32\mathbb{E}\left(U_1-\frac{1}{2}\right)\mathbb{E}\left(U_2-\frac{1}{2}\right)\mathbb{E}\left(U_3-\frac{1}{2}\right)=0.$$
		The rank correlations are $\rho_{12}^S=\rho_{13}^S=\rho_{23}^S=0$.
	\end{enumerate}
\end{example}
These examples also support the proof for Proposition~\ref{any rho zero rs}.
\section{Coskewness and tail risk} \label{tail risk}
Rather than using Pearson correlation as in the previous sections, we utilize the event conditional correlation coefficient to analyze the relationship between two random variables, $X_i$ and $X_j$, $i,j=1,2,3$, given a particular event $\mathcal{A}$; see the same definition in \cite{maugis2014event}. This coefficient, denoted as $\rho_{ij|\mathcal{A}}$ and given as 
\begin{equation}\label{con correlation}
	\rho_{ij|\mathcal{A}}=\frac{\mathbb{E}((X_i-\mu_{X_i|\mathcal{A}})(X_j-\mu_{X_j|\mathcal{A}})|\mathcal{A})}{\sigma_{X_i|\mathcal{A}}\sigma_{X_j|\mathcal{A}}}
\end{equation}
quantifies the degree of correlation between $X$ and $Y$, conditioned on event $\mathcal{A}$. Similarly, $\mu_{X_i|\mathcal{A}}$ and $\sigma_{X_i|\mathcal{A}}$ are the respective conditional mean and standard deviation of $X_i$.

One notable application of conditional correlation in risk management is the exceedance correlation, where event $\mathcal{A}$ is defined as exceeding a certain threshold, i.e., $\{X_i>\theta_1, X_j>\theta_2\}$ or $\{X_i\leq \theta_1, X_j\leq \theta_2\}$. \cite{longin2001extreme} first introduce the concept of exceedance correlation to study the dependence structure of international equity markets, while more recent studies, such as \cite{sakurai2020has}, apply this concept to investigate the relationship between oil and the US stock market. In some cases, the exceedance correlation is calculated using the inverse of the cumulative distribution functions of $X_i$ and $X_j$, denoted as $\theta_1=F_i^{-1}(p)$ and $\theta_2=F_j^{-1}(p)$, respectively, in which $p\in [0,1]$. For example, \cite{garcia2011dependence} use this approach to test the co-movement trend between international equity and bond markets. However, it should be noted that the exceedance correlation is constantly equal to one under specific dependence structures, as described by Equations \eqref{max copula for odd dimension} and \eqref{min copula for odd dimension}. Another interesting conditional correlation in finance is when the event $\mathcal{A}$ is the overall volatility of the market ($Z$) greater than a crisis volatility threshold (z), i.e., we consider $\rho_{ij|Z>z_c}$. $X_i$ and $X_j$ can be two asset returns in the conditional correlation. Banks are considerably interested in estimating $\rho_{ij|Z>z_c}$ efficiently. \cite{kalkbrener2015correlation} conducted a study of $\rho_{ij|Z>z_c}$ on determining the appropriate amount of funds to allocate towards crisis management, while \cite{kenett2015partial} researched efficient asset allocation during a crisis.

In this subsection, we investigate the relationship between the coskewness and the downside risk, which is a type of conditional correlation when event $\mathcal{A}$ is $\{S<\mu_S \text{ where } S=\sum_{i=1}^{3}X_i \text{ and } \mu_S=\mathbb{E}S\}$ in \eqref{con correlation}. Downside risk was first proposed by \cite{bawa1977capital} as a measure of risk for developing a capital asset pricing model and has gained significant interest in portfolio optimization. We refer to \cite{lettau2014conditional} and \cite{zhang2021downside} for further applications of downside risk in finance.

\cite{ang2006downside} study the relationship between downside risk and coskewness and find that the risks differ. In this study, we aim to explore if there exists a theoretical connection between downside risk and coskewness risk. To do so, we use the same parameter settings as in Section~\ref{three norm sec} for Algorithm~\ref{mix alg dim3} but adjust the last step to compute the conditional correlation.

Figure~\ref{cosk and downside risk} illustrates the relationship between the coskewness of three normal random variables with the mixture copula $C^{\lambda}$ and the pairwise downside risks, $\rho_{ij|S<\mu_S}$, $i,j=1,2,3$ and $i\neq j$. Our result shows that as the coskewness becomes more negative, the downside risk sharply increases. Moreover, the reduction rate of downside risk slows down as the coskewness increases. Overall, we find that the downside risk decreases as the coskewness increases, confirming the empirical findings of \cite{ang2006downside} and \cite{huang2012extreme}. They conclude that higher downside risk leads to higher average stock returns, while coskewness risk has the opposite effect. That is, higher coskewness is associated with lower downside risk.

\begin{figure}
	\centering
	\includegraphics[width=1\textwidth]{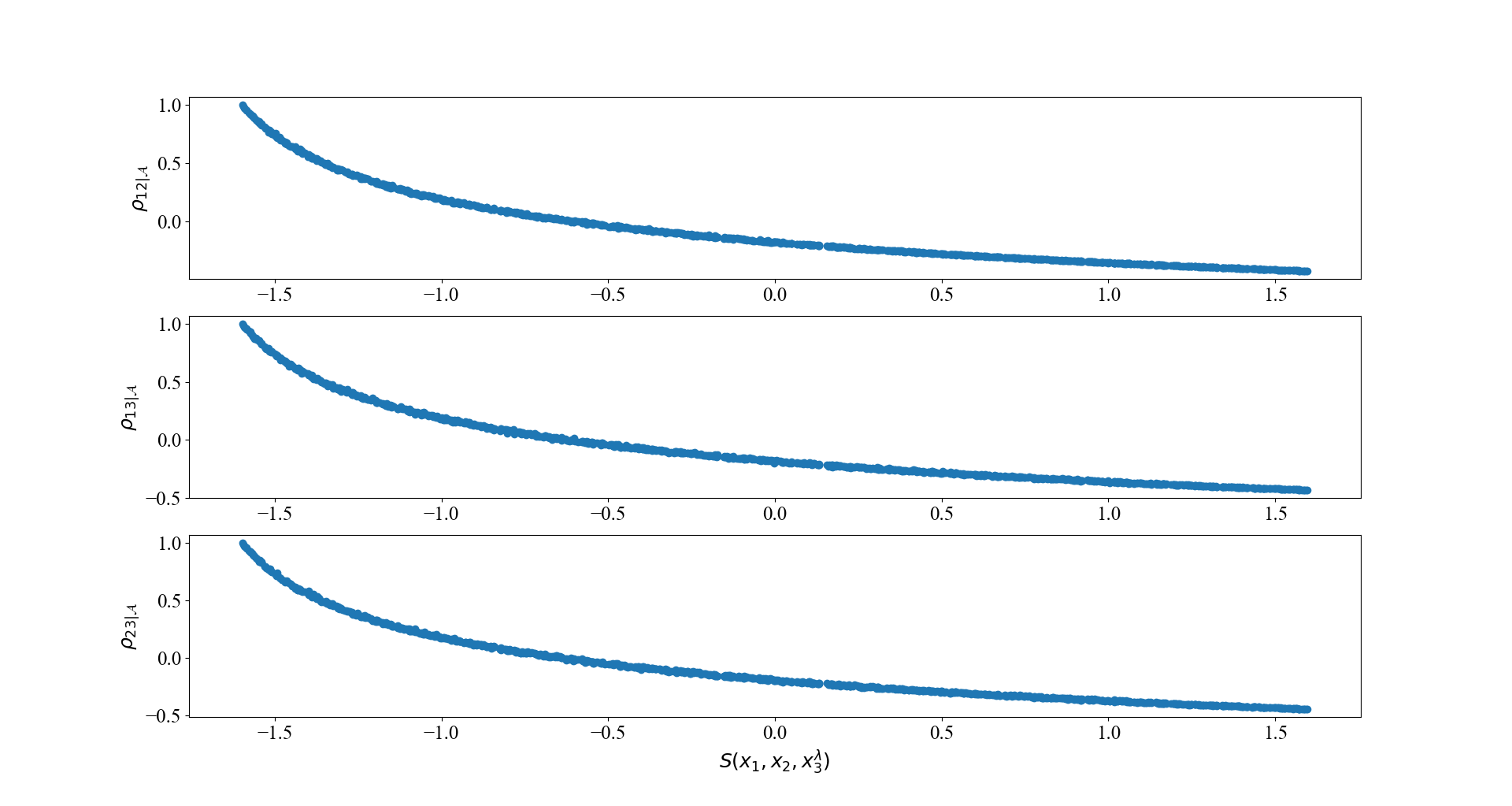}
	\caption{The effect of coskewness on conditional correlations $\rho_{ij|\mathcal{A}}$. The random vector $(X_1,X_2,X_3^{\lambda})$ has normal marginals and the mixture copula $C^\lambda$. The event $\mathcal{A}$ is $\{S<\mu_S \text{ where } S=\sum_{i=1}^{3}X_i \text{ and } \mu_S=\mathbb{E}S\}$. The coskewness and downside risks are obtained by implementing the Algorithm~\ref{mix alg dim3} with $n=10^5$.}
	\label{cosk and downside risk}
\end{figure}


\section{Conclusion}
In this paper, we provide some propositions and examples to illustrate that, in general, there is no link between coskewness and correlation. Under the assumption of some specific models, the coskewness does not affect the correlation, and vice versa. Specifically, the coskewness of three symmetrically distributed random variables takes any values between the maximum and minimum, but the pairwise correlations are equal to zero. Moreover, under the trivariate Gaussian model assumption, the pairwise correlations can reach all possible values, while coskewness equals zero. We generalize the result using the standardized rank coskewness and the rank correlation for all continuous and strictly increasing marginal distributions. Therefore, one needs to be careful when finding potential links between the coskewness and the correlation empirically and theoretically.

\titleformat{\section}[block]{\Large\bfseries}{Appendix \Alph{section}.}{1ex}{}[]
\begin{appendices}
	\section{Simulation of the dependence structures $C^{\lambda}$}\label{sim mix copula}
	As the function $\Phi^{-1}$ is cumbersome to deal with, we propose the following algorithm to compute the coskewness and the pairwise correlation coefficients of mixture random variables. We set $\mu_i=0$ and $\sigma_i=1$ because the location and scale parameters do not affect the coskewness and correlation coefficient.
	\begin{algorithm}\label{mix alg dim3}
		\begin{enumerate}
			\item []
			\item Set the mixture parameter $\lambda\in [0,1]$.
			\item Simulate $\mathbf{u}=(u_1,\dots, u_n)$, $\mathbf{v}=(v_1,\dots,v_n)$ and $\mathbf{b}=(b_1,\dots, b_n)$ where $u_i$, $v_i$ and $b_i$ $i=1,\dots,n$, are respective $n$ sampled values from random variables $U\sim U[0, 1]$, $V\sim U[0, 1]$ and $B\sim Bernoulli(\lambda)$. 
			\item Compute discrete maximizing and minimizing copulas $\mathbf{u_j^M}=(u_{1j}^M,\dots, u_{nj}^M)$ and $\mathbf{u_j^m}=(u_{1j}^m,\dots, u_{nj}^m)$, $j=1,2,3$, using $\mathbf{u}$ and $\mathbf{v}$ in terms of copulas \eqref{max copula for odd dimension} and \eqref{min copula for odd dimension}, respectively.
			\item Compute discrete mixture copula $\mathbf{c_j^{\lambda}}=(c_{1j}^{\lambda},\dots, c_{nj}^{\lambda})$ where $c_{ij}^{\lambda}=b_iu_{ij}^M+(1-b_i)u_{ij}^m$.
			\item Compute discrete mixture random variables $\mathbf{x_j}=(x_{1j},\dots,x_{nj})$ where $x_{ij}=\Phi^{-1}(c_{ij}^{\lambda})$.
			\item Compute $\bar{x}_j=\frac{1}{n}\sum_{i=1}^{n}x_{ij}$ and $s_j=\sqrt{\frac{1}{n}\sum_{i=1}^{n}[(x_{ij}-\bar{x}_j)^2]}$. Then $\rho_{jk}=\frac{\frac{1}{n}\sum_{i=1}^{n}[(x_{ij}-\bar{x}_j)(x_{ik}-\bar{x}_k)]}{s_js_k}$ for $k=1,2,3$ and $k\neq j$, and $S(X_1,X_2,X_3)=\frac{\frac{1}{n}\sum_{i=1}^{n}[(x_{i1}-\bar{x}_1)(x_{i2}-\bar{x}_2)(x_{i3}-\bar{x}_3)]}{s_1s_2s_3}$.
		\end{enumerate}
	\end{algorithm}
\end{appendices}
\newpage
\bibliographystyle{chicago}
\bibliography{Bibliography}

\begin{thebibliography}{}

\bibitem[\protect\citeauthoryear{Ang, Chen, and Xing}{Ang
  et~al.}{2006}]{ang2006downside}
Ang, A., J.~Chen, and Y.~Xing (2006).
\newblock Downside risk.
\newblock {\em Review of Financial Studies\/}~{\em 19\/}(4), 1191--1239.

\bibitem[\protect\citeauthoryear{Bawa and Lindenberg}{Bawa and
  Lindenberg}{1977}]{bawa1977capital}
Bawa, V.~S. and E.~B. Lindenberg (1977).
\newblock Capital market equilibrium in a mean-lower partial moment framework.
\newblock {\em Journal of Financial Economics\/}~{\em 5\/}(2), 189--200.

\bibitem[\protect\citeauthoryear{Beddock and Karehnke}{Beddock and
  Karehnke}{2020}]{beddock2020two}
Beddock, A. and P.~Karehnke (2020).
\newblock Two skewed risks.
\newblock {\em Preprint\/}.

\bibitem[\protect\citeauthoryear{Bernard, Chen, R{\"u}schendorf, and
  Vanduffel}{Bernard et~al.}{2023}]{bernard2023coskewness}
Bernard, C., J.~Chen, L.~R{\"u}schendorf, and S.~Vanduffel (2023).
\newblock Coskewness under dependence uncertainty.
\newblock {\em Statistics \& Probability Letters\/}~{\em 199}, 109853.

\bibitem[\protect\citeauthoryear{Garcia and Tsafack}{Garcia and
  Tsafack}{2011}]{garcia2011dependence}
Garcia, R. and G.~Tsafack (2011).
\newblock Dependence structure and extreme comovements in international equity
  and bond markets.
\newblock {\em Journal of Banking \& Finance\/}~{\em 35\/}(8), 1954--1970.

\bibitem[\protect\citeauthoryear{Huang, Liu, Rhee, and Wu}{Huang
  et~al.}{2012}]{huang2012extreme}
Huang, W., Q.~Liu, S.~G. Rhee, and F.~Wu (2012).
\newblock Extreme downside risk and expected stock returns.
\newblock {\em Journal of Banking \& Finance\/}~{\em 36\/}(5), 1492--1502.

\bibitem[\protect\citeauthoryear{Jondeau and Rockinger}{Jondeau and
  Rockinger}{2006}]{jondeau2006optimal}
Jondeau, E. and M.~Rockinger (2006).
\newblock Optimal portfolio allocation under higher moments.
\newblock {\em European Financial Management\/}~{\em 12\/}(1), 29--55.

\bibitem[\protect\citeauthoryear{Kalkbrener and Packham}{Kalkbrener and
  Packham}{2015}]{kalkbrener2015correlation}
Kalkbrener, M. and N.~Packham (2015).
\newblock Correlation under stress in normal variance mixture models.
\newblock {\em Mathematical Finance\/}~{\em 25\/}(2), 426--456.

\bibitem[\protect\citeauthoryear{Kenett, Huang, Vodenska, Havlin, and
  Stanley}{Kenett et~al.}{2015}]{kenett2015partial}
Kenett, D.~Y., X.~Huang, I.~Vodenska, S.~Havlin, and H.~E. Stanley (2015).
\newblock Partial correlation analysis: Applications for financial markets.
\newblock {\em Quantitative Finance\/}~{\em 15\/}(4), 569--578.

\bibitem[\protect\citeauthoryear{Lettau, Maggiori, and Weber}{Lettau
  et~al.}{2014}]{lettau2014conditional}
Lettau, M., M.~Maggiori, and M.~Weber (2014).
\newblock Conditional risk premia in currency markets and other asset classes.
\newblock {\em Journal of Financial Economics\/}~{\em 114\/}(2), 197--225.

\bibitem[\protect\citeauthoryear{Lindsay}{Lindsay}{1995}]{lindsay1995mixture}
Lindsay, B.~G. (1995).
\newblock {\em Mixture models: theory, geometry, and applications}.
\newblock Institute of Mathematical Statistics.

\bibitem[\protect\citeauthoryear{Longin and Solnik}{Longin and
  Solnik}{2001}]{longin2001extreme}
Longin, F. and B.~Solnik (2001).
\newblock Extreme correlation of international equity markets.
\newblock {\em Journal of Finance\/}~{\em 56\/}(2), 649--676.

\bibitem[\protect\citeauthoryear{Maugis}{Maugis}{2014}]{maugis2014event}
Maugis, P. (2014).
\newblock Event conditional correlation: Or how non-linear linear dependence
  can be.
\newblock {\em arXiv preprint arXiv:1401.1130\/}.

\bibitem[\protect\citeauthoryear{McNeil, Ne{\v{s}}lehov{\'a}, and Smith}{McNeil
  et~al.}{2022}]{mcneil2022attainability}
McNeil, A.~J., J.~G. Ne{\v{s}}lehov{\'a}, and A.~D. Smith (2022).
\newblock On attainability of {K}endall’s tau matrices and concordance
  signatures.
\newblock {\em Journal of Multivariate Analysis\/}~{\em 191}, 105033.

\bibitem[\protect\citeauthoryear{Pearson}{Pearson}{1895}]{pearson1895vii}
Pearson, K. (1895).
\newblock Note on regression and inheritance in the case of two parents.
\newblock {\em Proceedings of the Royal Society of London\/}~{\em
  58\/}(347-352), 240--242.

\bibitem[\protect\citeauthoryear{Pearson}{Pearson}{1907}]{pearson1907further}
Pearson, K. (1907).
\newblock {\em On further methods of determining correlation}, Volume~16.
\newblock Dulau and Company.

\bibitem[\protect\citeauthoryear{Rose, Smith, et~al.}{Rose
  et~al.}{2002}]{rose2002mathematical}
Rose, C., M.~D. Smith, et~al. (2002).
\newblock {\em Mathematical statistics with Mathematica}, Volume~1.
\newblock Springer.

\bibitem[\protect\citeauthoryear{R{\"u}schendorf and Uckelmann}{R{\"u}schendorf
  and Uckelmann}{2002}]{ruschendorf2002variance}
R{\"u}schendorf, L. and L.~Uckelmann (2002).
\newblock Variance minimization and random variables with constant sum.
\newblock In {\em Distributions with given marginals and statistical
  modelling}, pp.\  211--222. Springer.

\bibitem[\protect\citeauthoryear{Sakurai and Kurosaki}{Sakurai and
  Kurosaki}{2020}]{sakurai2020has}
Sakurai, Y. and T.~Kurosaki (2020).
\newblock How has the relationship between oil and the us stock market changed
  after the {C}ovid-19 crisis?
\newblock {\em Finance Research Letters\/}~{\em 37}, 101773.

\bibitem[\protect\citeauthoryear{Zhang, Li, Xiong, and Wang}{Zhang
  et~al.}{2021}]{zhang2021downside}
Zhang, W., Y.~Li, X.~Xiong, and P.~Wang (2021).
\newblock Downside risk and the cross-section of cryptocurrency returns.
\newblock {\em Journal of Banking \& Finance\/}~{\em 133}, 106246.

\end{thebibliography}
\end{document}